\DeclareMathAlphabet{\mathpzc}{OT1}{pzc}{m}{it}
\theoremstyle{plain}
\newtheorem{theorem}{Theorem}[section]
\newtheorem{lemma}[theorem]{Lemma}
\newtheorem{proposition}[theorem]{Proposition}
\newtheorem{corollary}[theorem]{Corollary}
\theoremstyle{definition}
\newtheorem{examples}[theorem]{Examples}
\theoremstyle{remark}
\newtheorem{remark}[theorem]{Remark}
\newenvironment{eqcond}{\begin{enumerate}}{\end{enumerate}}
\newcommand{\rw}{\rightarrow}
\newcommand{\Rw}{\Rightarrow}
\newcommand{\hrw}{\hookrightarrow}
\newcommand{\modto}{{\longrightarrow\hspace*{-2.8ex}{\circ}\hspace*{1.2ex}}}
\DeclareMathOperator{\yoneda}{\mathpzc{y}}
\DeclareMathOperator{\downc}{\downarrow\!}
\DeclareMathOperator{\Cauchy}{Cauchy}
\DeclareMathOperator{\ev}{ev}
\DeclareMathOperator{\und}{\&}
\newcommand{\mate}[1]{\,^\ulcorner\! #1^\urcorner}
\newcommand{\catfont}[1]{\mathsf{#1}}
\newcommand{\V}{\catfont{V}}
\newcommand{\W}{\catfont{W}}
\newcommand{\two}{\catfont{2}}
\newcommand{\quantale}{(\V,\otimes,k)}
\newcommand{\SET}{\catfont{Set}}
\newcommand{\ORD}{\catfont{Ord}}
\newcommand{\MET}{\catfont{Met}}
\newcommand{\PROBMET}{\catfont{ProbMet}}
\newcommand{\Mod}[1]{#1\text{-}\catfont{Mod}}
\newcommand{\Cat}[1]{#1\text{-}\catfont{Cat}}
\newcommand{\eps}{\varepsilon}
\newcommand{\op}{\mathrm{op}}
\newcommand{\true}{\mathsf{true}}
\newcommand{\false}{\mathsf{false}}
\newcommand{\trunminus}{\ominus}
\newcommand{\trundiv}{\varobslash}
\newcommand{\blackright}{\mbox{ $-\!\mbox{\footnotesize $\bullet$}$ }}
\newcommand{\blackleft}{\mbox{ $\mbox{\footnotesize $\bullet$}\!-$ }}
\newcommand{\field}[1]{\mathds{#1}}
\newcommand{\N}{\field{N}}
\begin{document}

\title{Probabilistic Metric Spaces as enriched categories}

\author{Dirk Hofmann}
\address{Center for Research and Development in Mathematics and Applications, Departament of Matematics, University of Aveiro, 3810-193 Aveiro, Portugal}
\email{dirk@ua.pt}

\author{Carla David Reis}
\address{Center for Research and Development in Mathematics and Applications, University of Aveiro, Portugal \&
Department of Science and Technology, College of Management and Technology of Oliveira do Hospital, Polytechnic Institute of Coimbra, 3400-124 Oliveira do Hospital, Portugal}
\email{carla.reis@estgoh.ipc.pt}

\thanks{The first author acknowledges partial financial assistance by FEDER funds through COMPETE -- Operational Programme Factors of Competitiveness (Programa Operacional Factores de Competitividade) and by Portuguese funds through the Center for Research and Development in Mathematics and Applications (University of Aveiro) and the Portuguese Foundation for Science and Technology (FCT -- Funda\c{c}\~ao para a Ci\^encia e a Tecnologia), within project PEst-C/MAT/UI4106/2011 with COMPETE number FCOMP-01-0124-FEDER-022690, and the project MONDRIAN (under the contract PTDC/EIA-CCO/108302/2008).
The second author acknowledges partial financial assistance by grant SFRH/PROTEC/49762/2009 (FCT -- Funda\c{c}\~ao para a Ci\^encia e Tecnologia) from the Portuguese government.}

\subjclass[2010]{18A05, 18B35, 18D15, 18D20, 54A20, 54B30}
\keywords{quantale, enriched category, left adjoint module, Cauchy completeness, exponentiability, injectivity}

\begin{abstract}
In this paper we investigate Cauchy completeness and exponentiablity for quantale enriched categories, paying particular attention to probabilistic metric spaces.
\end{abstract}

\maketitle

\section*{Introduction}

Lawvere's ground-breaking paper \cite{Law_MetLogClo} presenting generalised metric spaces as enriched categories has motivated much work on the reconciliation of order, metric and category theory. However, the theory of categories enriched in a symmetric monoidal closed category  \cite{EK_CloCat,Kel_EnrCat} can become quickly technically very demanding, which prompted many authors to restrict themselves to the case where the enrichment takes place in a quantale (i.e.\ a thin symmetric monoidal closed category) where ``all diagrams commute'' and therefore all coherence issues disappear. This way the employed categorical notions and techniques have a very elementary formulation, however the theory still includes many interesting examples such as ordered sets, metric spaces and probabilistic metric spaces. We refer the reader in particular to the work \cite{Kop88,Flagg_ComplCont,Flagg_QuantCont,FSW_QDT,FK_ContSp} of Flagg \textit{et al.}\ on continuity spaces and the work \cite{BBR_GenMet,Rut98a} of the Amsterdam research group at CWI. Another important source of motivation was for us the work \cite{GR02} on the completion of fuzzy metric spaces, due to the similarities between the notions of fuzzy metric spaces and probabilistic metric spaces. In this paper we contribute to this line of research and study Cauchy completeness and exponentiability in quantale-enriched categories. We interpret our results in probabilistic metric spaces seen as categories enriched in the quantale of distribution functions, and show that in many cases categorical and classical notions coincide.  

One amazing insight of \cite{Law_MetLogClo} is a characterisation of the notion of Cauchy completeness for metric spaces using adjoint modules, giving further evidence to MacLane's motto ``adjoints occur almost everywhere'' \cite{MacLane_WorkMath}. This result was further generalised in \cite{Flagg_ComplCont} to categories enriched in a \emph{value quantale}: for such categories, Cauchy completeness can be equivalently described via modules and via Cauchy nets. Using the conceptual power of adjunction, in the first part of this paper we show that many results linking adjoint modules and Cauchy sequences (resp.\ nets) are valid under much milder assumptions. In the second part we investigate function spaces of quantale-enriched categories, showing in particular that injective probabilistic metric spaces are exponentiable.

\section{A brief introduction to quantale-enriched categories}

\subsection{Quantales}
Throughout this paper we consider a \emph{quantale} $\V=\quantale$, by which we mean a complete ordered set $\V$ equipped with an associative and commutative binary operation $\otimes:\V\times\V\to\V$ with neutral element $k$ satisfying
\begin{equation*}
u\otimes\bigvee_{i\in I}v_i=\bigvee_{i\in I}(u\otimes v_i),
\end{equation*}
for all $u,v_i\in \V$ and $i\in I$. In other words, each function $u\otimes-:\V\to\V$ ($u\in\V$) preserves suprema and therefore has a right adjoint $\hom(u,-):\V\to\V$. Consequently, there is a map $\hom:\V\times\V\to\V$ such that, for all $u,v,w\in\V$,
\begin{equation*}
u\otimes v\le w \iff v\le\hom(u,w).
\end{equation*}
\begin{examples}
\begin{enumerate}
\item The two-element Boolean algebra $\two=\{\false,\true\}$ is a quantale with tensor $\otimes=\&$ and $k=\true$. More general, every frame is a quantale with $\otimes=\wedge$ and $k=\top$.
\item The real half-line $[0,\infty]$ ordered by the ``greater or equal'' relation $\geqslant$ is a quantale, with tensor $\otimes=+$ (with $x+\infty=\infty+x=\infty$) for all $x,y\in[0,\infty]$; then $k=0$ is obviously the neutral element for $\otimes=+$ and one has $\hom(x,y)=y\trunminus x:=\max\{y-x,0\}$ (with $y-\infty=0$ and $\infty-x=\infty$ for $x,y\in[0,\infty]$, $x\neq\infty$). Also note that $0$ is the top and $\infty$ is the bottom element with respect to this order, and we have $\bigvee=\inf$ and $\bigwedge=\sup$ in $[0,\infty]$.
\item The interval $[0,1]$ is a quantale, with the usual order $\le$ and $\otimes=\cdot$ multiplication. The right adjoint is given here by ``division'' $\hom(x,y)=y\trundiv x$ where $y\trundiv 0=1$ and $y\trundiv x=\min\{\frac{y}{x},1\}$ for $x\neq 0$.
\end{enumerate}
\end{examples}
Given also a quantale $\W=(\W,\oplus,l)$ and a monotone map $F:\V\to\W$, we call $F$ a \emph{morphism of quantales} whenever  
\begin{align*}
 \bigvee_{i\in I}F(v_i)&=F(\bigvee_{i\in I}v_i), &
 F(u)\oplus F(v)&=F(u\otimes v), &
 l&=F(k),
\end{align*}
for all $u,v,v_i\in\V$ and $i\in I$. It turns out that for many applications it is enough to have inequalities above; in this case we say that $F$ is a \emph{lax morphism of quantales}. That is, a lax morphism of quantales $F:\V\to\W$ only needs to satisfy, for all $u,v\in\V$,
\begin{align*}
 F(u)\oplus F(v)&\le F(u\otimes v), &
 l&\le F(k).
\end{align*}
Note that the inequality $\bigvee_{i\in I}F(v_i)\le F(\bigvee_{i\in I}v_i)$ follows from monotonicity of $F$.

\begin{examples}\label{exs:MorphQuant}
The map $I:\two\to[0,\infty]$ interpreting $\false$ as $\infty$ and $\true$ as $0$ is a morphism of quantales, and note that $I$ is indeed monotone since we consider the ``greater or equal'' relation $\geqslant$ on $[0,\infty]$. Furthermore, $I:\two\to[0,\infty]$ has a left and a right adjoint given by
\begin{align*}
 O:[0,\infty] &\to\two, & P:[0,\infty]&\to\two\\
  x &\mapsto
\begin{cases}
 \true & \text{if $x<\infty$}\\
 \false & \text{if $x=\infty$}
\end{cases}
&
x &\mapsto
\begin{cases}
 \true & \text{if $x=0$}\\
 \false & \text{if $x>0$}
\end{cases}
\end{align*}
respectively. Here $O:[0,\infty]\to\two$ is a morphism of quantales as well, but $P:[0,\infty]\to\two$ is only a lax morphism of quantales.
 
This construction can be generalised to an arbitrary quantale $\V$. Firstly, the map $I:\two\to \V$ interpreting $\false$ as $\bot$ and $\true$ as $k$ is a morphism of quantales. Since $I$ preserves suprema it has a right adjoint $P:\V\to\two$; and $I$ preserves the top element precisely if $k=\top$, and in this case $I$ preserves all infima and therefore has a left adjoint $O$. Furthermore, the left and the right adjoint of $I$ are given by
\begin{align*}
 O:\V &\to\two  &  P:\V&\to\two\\
  x &\mapsto
\begin{cases}
 \true & \text{if $x \neq \bot$}\\
 \false & \text{if $x=\bot$}
\end{cases}
&
x &\mapsto
\begin{cases}
 \true & \text{if $x\ge k$}\\
 \false & \text{else}
\end{cases}
\end{align*}
respectively. Being left adjoint, $O:\V\to\two$ preserves suprema, and one also verifies easily that $O(k)=\true$; but $O$ is in general not a lax morphism of quantales since one only has $O(u\otimes v)\le O(u)\,\&\, O(v)$, with equality if and only if $u\otimes v=\bot \Rightarrow u=\bot \text{ or } v=\bot$, for all $u,v\in\V$. Finally, the right adjoint $P:\V\to\two$ is a lax morphism of quantales.

The bijection
\[
 E:[0,\infty]\to [0,1],\, x\mapsto\exp(-x)
\]
(where $\exp(-\infty)=0$) is a morphism of quantales, and so is its inverse
\[
 L:[0,1]\to[0,\infty],\,x\mapsto -\ln(x)
\]
(where $-\ln(0)=\infty$).
\end{examples}

In the sequel we will occasionally assume that $\V$ is \emph{completely distributive} \cite{Raney_CD}. This amounts to saying that suprema commute with infima in our complete lattice $\V$, which can be expressed by saying that the monotone map $\bigvee:\two^{\V^\op}\to\V$ preserves infima. Since $\two^{\V^\op}$ is complete, preservation of infima is equivalent to the existence of a left adjoint $A:\V\to\two^{\V^\op}$ of $\bigvee$ which can be described as follows. For $u,x\in \V$, one says that $u$ is \emph{totally below} $x$, written as $u\ll x$, if, for every $S\subseteq \V$, $x\le\bigvee S$ entails $u\in\downc S$. Then $\V$ is completely distributive if and only if, for every $x\in\V$, $x=\bigvee\{u\in\V\mid u\ll x\}$; and in this case one has $A(x)=\{u\in\V\mid u\ll x\}$. Note that the totally-below relation $\ll$ is defined for every (complete) ordered set $X$, and enjoys the following properties (see \cite{Woo_OrdAdj}, for instance):
\begin{enumerate}
\item if $x\ll y$, then $x\le y$,
\item $x\le y\ll z$ implies $x\ll z$ and $x\ll y\le z$ implies $x\ll z$,
\item if $x\ll z$, then there exists some $y\in X$ with $x\ll y\ll z$.
\end{enumerate}

\subsection{$\V$-categories}

A \emph{$\V$-category} $X=(X,a)$ is a set $X$ together with a map $a:X\times X\to\V$ satisfying
\begin{align*}
 k\le a(x,x), &&  a(x,y)\otimes a(y,z)\le a(x,z)
\end{align*}
for all $x,y,z\in X$. A \emph{$\V$-functor} $f:(X,a)\to (Y,b)$ is a map $f:X\to Y$ such that
\[
 a(x,y)\le b(f(x),f(y))
\]
for all $x,y\in X$. We call a $\V$-functor $f:(X,a)\to (Y,b)$ \emph{fully faithful} if $a(x,y)= b(f(x),f(y))$ for all $x,y\in X$. The resulting category of $\V$-categories and $\V$-functors will be denoted by $\Cat{\V}$. We note that the quantale $\V$ gives rise to the $\V$-category $\V=(\V,\hom)$.

\begin{examples}
For $\V=\two$, a $\V$-category is just a set equipped with a reflexive and transitive relation, and a $\V$-functor is a monotone map. Hence, $\Cat{\V}$ is the category $\ORD$ of (pre)ordered sets and monotone maps. For $\V=[0,\infty]$, a $\V$-category structure is a distance function $a:X\times X\to[0,\infty]$ which satisfies the conditions
\begin{align*}
 0\geqslant a(x,x) &&\text{and}&& a(x,y)+a(y,z)\geqslant a(x,z),
\end{align*}
for all $x,y,z\in X$; and a $\V$-functor is a non-expansive map. Hence, $\Cat{\V}$ is the category $\MET$ of (pre)metric spaces and non-expansive maps. However, in the sequel we follow the nomenclature of \cite{Law_MetLogClo} and call the objects of $\MET$ simply metric spaces, then a ``classical'' metric space becomes a \emph{separated} ($d(x,y)=0=d(y,x)$ implies $x=y$),  \emph{symmetric} ($d(x,y)=d(y,x)$) and \emph{finitary} ($d(x,y)<\infty$) metric space. In a similar manner, we do not assume an ordered set to be anti-symmetric, and therefore we call the objects of $\ORD$ simply ordered sets. Consequently, many notions of order theory such as suprema or infima are only unique up to equivalence $\simeq$, where $x\simeq x'$ if $x\le x'$ and $x'\le x$. However, we stress that our quantale $\V$ (being part of the syntax) is assumed to be anti-symmetric.
\end{examples}

Every lax morphism of quantales $F:\V\to\W$ induces a functor $F:\Cat{\V}\to\Cat{\W}$ which sends a $\V$-category $X=(X,a)$ to the $\W$-category $FX$ with the same underlying set $X$ and with the $\W$-categorical structure given by the composite
\[
 X\times X\xrightarrow{\;a\;}\V\xrightarrow{\;F\;}\W;
\]
and a $\V$-functor $f:(X,a)\to(Y,b)$ is sent to the $\W$-functor $Ff:=f:(X,F\cdot a)\to(Y,F\cdot b)$. If the monotone map $F:\V\to\W$ happens to have an adjoint $G:\W\to\V$ which is also a lax morphism of quantales, then the induced functor $G:\Cat{\W}\to\Cat{\V}$ is adjoint to $F:\Cat{\V}\to\Cat{\W}$. In particular, when $F:\V\to\W$ and $G:\W\to\V$ are inverse to each other, then they induce an isomorphism between $\Cat{\V}$ and $\Cat{\W}$.

\begin{examples}
For the (lax) morphisms of quantales considered in Examples \ref{exs:MorphQuant}, the functor $I:\ORD\to\MET$ interprets an order relation $\le$ on a set $X$ as the metric
\[
 d(x,y)=
\begin{cases}
 0 & \text{if }x\le y,\\
 \infty & \text{else.}
\end{cases}
\]
The functor $I:\ORD\to\MET$ has a left adjoint $O:\MET\to\ORD$ which takes a metric $d$ on $X$ to the order relation
\[
x\le y \text{ whenever } d(x,y)<\infty,
\]
and a right adjoint $P:\MET\to\ORD$ which sends a metric $d$ on $X$ to the order relation
 \[
x\le y \text{ whenever } 0\geqslant d(x,y).
\]
Finally, we find that $\MET\simeq\Cat{[0,\infty]}$ and $\Cat{[0,1]}$ are isomorphic.
\end{examples}

To every $\V$-category $X=(X,a)$ one associates its \emph{dual} $X^\op=(X,a^\circ)$ where $a^\circ(x,y)=a(y,x)$, for all $x,y\in X$. Since a $\V$-functor $f:(X,a)\to(Y,b)$ can also be seen as a $\V$-functor of type $(X,a^\circ)\to(Y,b^\circ)$, we actually obtain a functor $(-)^\op:\Cat{\V}\to\Cat{\V}$. A $\V$-category $X=(X,a)$ is called \emph{symmetric} whenever $X=X^\op$, which amounts to saying that $a(x,y)=a(y,x)$ for all $x,y\in X$.

There is a canonical forgetful functor $\Cat{\V}\to\ORD$ which sends a $\V$-category $X=(X,a)$ to the ordered set $(X,\le)$ where
\[
 x\le x':\iff k\le a(x,x'), \qquad\qquad (x,x'\in X).
\]
A $\V$-category $X=(X,a)$ is called \emph{separated} if the underlying order is anti-symmetric, that is, if $x\simeq y$ implies $x=y$, for all $x,y\in X$. Note that for $\V=[0,\infty]$ the notions of symmetry and separatedness coincide with the usual ones for metric spaces.

The order relation on $\V$-categories can be extended point-wise to $\V$-functors $f,g:(X,a)\to (Y,b)$: one defines $f\le g$ whenever $f(x)\le g(x)$ for all $x\in X$; and composition from either side preserves this order. One fundamental consequence of the fact that $\Cat{\V}$ has ordered hom-sets is the possibility to talk about \emph{adjunction}. Here a pair of $\V$-functors $f:(X,a)\to (Y,b)$ and $g:(Y,b)\to(X,a)$ forms an adjunction $f\dashv g$ whenever $1_X\le g\cdot f$ and $f\cdot g\le 1_{Y}$. Equivalently, $f\dashv g$ if and only if, for all $x\in X$ and $y\in Y$,
\[
 b(f(x),y)=a(x,g(y));
\]
and the formula above explains why one calls $f$ left adjoint and $g$ right adjoint. We also recall that a left adjoint $f$ has at most one right adjoint since $f\dashv g$ and $f\dashv g'$ imply $g\simeq g'$; and dually, $f\dashv g$ and $f'\dashv g$ imply $f\simeq f'$.

The canonical forgetful functor $\Cat{\V}\to\SET,\,(X,a)\mapsto X$ is topological (see \cite{AHS}) where the initial structure on $X$ with respect to the family $f_i:X\to (X_i,a_i)$ ($i\in I$) is given by
\[
 a(x,x'):=\bigwedge_{i\in I}a_i(f_i(x),f_i(x')),
\]
for all $x,x'\in X$. Hence, $\Cat{\V}$ admits all limits and all colimits which are, moreover, preserved by $\Cat{\V}\to\SET$. In particular, the product $X\times Y$ of $\V$-categories $X=(X,a)$ and $Y=(Y,b)$ can be constructed by taking the Cartesian product $X\times Y$ of the sets $X$ and $Y$, and then turning this into a $\V$-category by putting
\[
 d((x,y),(x',y'))=a(x,x')\wedge b(y,y'),
\]
for all $(x,y),(x',y')\in X\times Y$.

More important to us is, however, the structure
\[
 (a\otimes b)((x,y),(x',y'))=a(x,x')\otimes b(y,y')
\]
on the set $X\times Y$, defining the $\V$-category $X\otimes Y=(X\times Y,a\otimes b)$. This tensor product $\otimes$ on $\Cat{\V}$ is associative and commutative, and has $E=(1,k)$ (with a singleton set $1=\{\star\}$ and $k(\star,\star)=k$) as neutral object. Note that in general $E=(1,k)$ must be distinguished from the terminal object $1=(1,\top)$ in $\Cat{\V}$. What makes this structure more interesting is the fact that, unlike $X\times-$, the functor $X\otimes-:\Cat{\V}\to\Cat{\V}$ has a right adjoint $[X,-]$, for every $\V$-category $X=(X,a)$. Here, given also $Y=(Y,b)$ in $\Cat{\V}$, $[X,Y]$ can be taken as $[X,Y]=(\Cat{\V}(X,Y),[a,b])$ where 
\[
 [a,b](f,g)=\bigwedge_{x\in X}b(f(x),g(x)).
\]
In the sequel we will pay particular attention to the $\V$-category $[X^\op,\V]$ where $\V=(\V,\hom)$. To simplify notation, we write $[h,h']$ instead of $[a^\circ,\hom](h,h')$ in this case.

\subsection{$\V$-modules}\label{subsect:Vmodules}

Besides $\V$-functors, there is another important type of morphisms between $\V$-categories, namely \emph{$\V$-modules} (also called distributors or profunctors). For $\V$-categories $X=(X,a)$ and $Y=(Y,b)$, a $\V$-module $\varphi:X\modto Y$ is a map $\varphi:X\times Y\to\V$ with a ``left $a$-action'' and a ``right $b$-action'' in the sense that, for all $x,x'\in X$ and $y,y'\in Y$,
\begin{align*}
 a(x,x')\otimes\varphi(x',y') &\le\varphi(x,y'), &
 \varphi(x,y)\otimes b(y,y') &\le \varphi(x,y').
\end{align*}
Given also $\psi:(Y,b)\modto(Z,c)$, we can calculate its composite $\psi\cdot\varphi$ with $\varphi$ as
\[
 \psi\cdot\varphi(x,z)=\bigvee_{y\in Y}\varphi(x,y)\otimes\psi(y,z),
\]
and $\psi\cdot\varphi$ is indeed a $\V$-module $\psi\cdot\varphi:(X,a)\modto(Z,c)$. By definition of a $\V$-category $(X,a)$, $a:(X,a)\modto(X,a)$ is a $\V$-module and one has $\varphi\cdot a =\varphi=b\cdot\varphi$. Hence, in the category $\Mod{\V}$ of $\V$-categories and $\V$-modules, $a:(X,a)\modto(X,a)$ is the identity $\V$-module on $X=(X,a)$ with respect to composition of $\V$-modules. The set $\Mod{\V}(X,Y)$ of $\V$-modules from $X$ to $Y$ is actually a complete ordered set where the supremum of a family $\varphi_i:X\modto Y$ ($i\in I$) is calculated point-wise. Since composition from either side preserves suprema, both monotone maps $-\cdot\varphi$ and $\varphi\cdot-$ (where $\varphi:X\modto Y$) have a right adjoint in $\ORD$. A right adjoint $-\blackleft\varphi$ of $-\cdot\varphi$ must give, for each $\psi:X\modto Z$, the largest module of type $Y\modto Z$ whose composite with $\varphi$ is contained in $\psi$,
\[
\xymatrix{X\ar[r]|-{\object@{o}}^\psi\ar[d]|-{\object@{o}}_\varphi & Z\\ Y\ar@{..>}[ur]|-{\object@{o}}^\subseteq}
\]
and we call $\psi\blackleft\varphi$ the \emph{extension of $\psi$ along $\varphi$}. Explicitly,
\[
 \psi\blackleft\varphi(y,z)=\bigwedge_{x\in X}\hom(\varphi(x,y),\psi(x,z)).
\]
Similarly, a right adjoint $\varphi\blackright-$ of $\varphi\cdot-$ must give, for each $\psi:Z\modto Y$, the largest module of type $Z\modto X$ whose composite with $\varphi$ is contained in $\psi$.
\[
\xymatrix{Y & Z\ar[l]|-{\object@{o}}_\psi\ar@{..>}[dl]|-{\object@{o}}_\supseteq\\ X\ar[u]|-{\object@{o}}^\varphi}
\]
The $\V$-module $\varphi\blackright\psi$ is called the \emph{lifting of $\psi$ along $\varphi$}, and can be calculated as
\[
\varphi\blackright\psi(z,x)=\bigwedge_{y\in Y}\hom(\varphi(x,y),\psi(z,y)).
\]
Every $\V$-functor $f:(X,a)\to(Y,b)$ gives rise to a $\V$-module $f_*:(X,a)\modto(Y,b)$ defined by
\[
 f_*(x,y)=b(f(x),y),
\]
for all $x\in X$ and $y\in Y$. Then, $(1_X)_*=a$ and, with $g:(Y,b)\to(Z,c)$, $(gf)_*=g_*\cdot f_*$, which tells us that $(-)_*$ is actually a functor
\[                                                                                                                                                  
(-)_*:\Cat{\V}\to\Mod{\V}                                                                                                                                                   \]
which leaves objects unchanged. The module $f_*:(X,a)\modto(Y,b)$ has a right adjoint $f^*:(Y,b)\modto(X,a)$ in the ordered category $\Mod{\V}$, meaning that $a\le f^*\cdot f_*$ and $f_*\cdot f^*\le b$, and $f^*$ is defined by
\[
 f^*(y,x)=b(y,f(x)).
\]
Since $(1_X)^*=a$ and $(gf)^*=f^*\cdot g^*$, this construction defines a functor
\[                                                                                                                                                  
(-)^*:\Cat{\V}^\op\to\Mod{\V}.                                                                                                                                           \]
For later use we record the calculation rules
\begin{align*}
 f^*\cdot\varphi(z,x) &=\varphi(z,f(x)), &
 \psi\cdot f_*(x,z)=\psi(f(x),z);
\end{align*}
for $\V$-modules $\varphi:Z\modto Y$ and $\psi:Y\modto Z$. It follows that $f$ is fully faithful if and only if $a=f^*\cdot f_*$, and we call $f$ \emph{fully dense} if $f_*\cdot f^*=b$.

In general, $\V$-modules $\varphi:X\modto Y$ and $\psi:Y\modto X$ form an adjunction $\varphi\dashv\psi$ if $a\le\psi\cdot\varphi$ and $\varphi\cdot\psi\le b$. As before, adjoints determine each other meaning that $\varphi\dashv\psi$ and $\varphi\dashv\psi'$ imply $\psi=\psi'$ as well as $\varphi\dashv\psi$ and $\varphi'\dashv\psi$ imply $\varphi=\varphi'$. Furthermore, one says that $\varphi$ is left adjoint whenever $\varphi\dashv\psi$ for some (unique) $\psi$, and that $\psi$ is right adjoint if $\varphi\dashv\psi$ for some (unique) $\varphi$. The following result will be extremely useful for calculating with adjoints.

\begin{lemma}\label{lem:EqAdjMod}
Let $\varphi,\varphi':X\modto Y$ and $\psi,\psi':Y\modto X$ be $\V$-modules with $\varphi\dashv\psi$, $\varphi'\dashv\psi'$, $\varphi\le\varphi'$ and $\psi\le\psi'$. Then $\varphi=\varphi'$ and $\psi=\psi'$.
\end{lemma}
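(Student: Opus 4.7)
The plan is to use the two adjunction inequalities in opposite directions, bridging them with the given comparison $\psi \le \psi'$ (respectively $\varphi \le \varphi'$). Specifically, from $\varphi \dashv \psi$ we have the unit inequality $a \le \psi \cdot \varphi$, and from $\varphi' \dashv \psi'$ we have the counit inequality $\varphi' \cdot \psi' \le b$. These are the only ingredients; the rest is a four-step chain using that composition of $\V$-modules preserves order on either side, together with the fact that $a$ and $b$ act as identities with respect to $\V$-module composition.

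To show $\varphi' \le \varphi$, I would compute
\[
 \varphi' \;=\; \varphi' \cdot a \;\le\; \varphi' \cdot (\psi \cdot \varphi) \;=\; (\varphi' \cdot \psi) \cdot \varphi \;\le\; (\varphi' \cdot \psi') \cdot \varphi \;\le\; b \cdot \varphi \;=\; \varphi,
\]
where the first inequality uses $a \le \psi \cdot \varphi$ (unit of $\varphi \dashv \psi$), the second uses the hypothesis $\psi \le \psi'$, and the third uses $\varphi' \cdot \psi' \le b$ (counit of $\varphi' \dashv \psi'$). Combined with the assumption $\varphi \le \varphi'$, this yields $\varphi = \varphi'$.

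For $\psi = \psi'$ I would then run the symmetric argument, namely
\[
 \psi' \;=\; a \cdot \psi' \;\le\; (\psi \cdot \varphi) \cdot \psi' \;=\; \psi \cdot (\varphi \cdot \psi') \;\le\; \psi \cdot (\varphi' \cdot \psi') \;\le\; \psi \cdot b \;=\; \psi,
\]
using again the unit of $\varphi \dashv \psi$, the hypothesis $\varphi \le \varphi'$, and the counit of $\varphi' \dashv \psi'$. Combined with $\psi \le \psi'$, this gives $\psi = \psi'$.

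There is no real obstacle here: the argument is the standard adjoint-comparison trick applied in the bicategory $\Mod{\V}$. The only thing to be careful about is the order of composition (which reads right-to-left, as stipulated earlier in the excerpt), and to remember that $a$ and $b$ are the identity modules on $X$ and $Y$ respectively, so that identities like $\varphi' \cdot a = \varphi'$ and $b \cdot \varphi = \varphi$ can be freely used to start and terminate the chains.
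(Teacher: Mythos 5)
Your proof is correct and uses essentially the same chain of inequalities as the paper: $\varphi'=\varphi'\cdot a\le\varphi'\cdot\psi\cdot\varphi\le\varphi'\cdot\psi'\cdot\varphi\le b\cdot\varphi=\varphi$ is exactly the paper's computation read in the other direction. The only cosmetic difference is that the paper deduces $\psi=\psi'$ from uniqueness of adjoints rather than running the symmetric calculation explicitly as you do.
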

\begin{proof}
By unicity of adjoints, it is enough to show $\varphi=\varphi'$, that is, $\varphi'\le\varphi$. To this end, we calculate $\varphi\ge\varphi'\cdot\psi'\cdot\varphi\ge\varphi'\cdot\psi\cdot\varphi\ge\varphi'$.
\end{proof}

Another connection between $\V$-modules and $\V$-functors is given by the fact that a map $\varphi:X\times Y\to\V$ is a $\V$-module precisely when $\varphi$ is a $\V$-functor of type $X^\op\otimes Y\to\V$. By passing to its exponential mate, we can view a $\V$-module $\varphi:(X,a)\modto(Y,b)$ as a $\V$-functor $\mate{\varphi}:Y\to[X^\op,\V]$. In particular, the $\V$-module $a:(X,a)\modto(X,a)$ corresponds to the Yoneda embedding $\yoneda_X:=\mate{a}:X\to[X^\op,\V]$, which is indeed a fully faithful $\V$-functor thanks to the \emph{Yoneda Lemma} which states that
\[
 [\yoneda_X(x),\psi]=\psi(x)
\]
for all $x\in X$ and $\psi\in[X^\op,\V]$. We also note that the set $[X^\op,\V]$ can be identified with $\Mod{\V}(X,1)$, and under this identification one has $[\psi,\psi']=\psi'\blackleft\psi$ and $\yoneda_X(x)=x^*$ (here we think of $x\in X$ as $x:1\to X$). If $\psi$ has a left adjoint $\varphi$, then $-\cdot\psi\dashv -\cdot\varphi$ and therefore $[\psi,\psi']=\psi'\cdot\varphi$, for all $\psi':X\modto 1$. In particular, for $\psi'=x^*$ we obtain $[\psi,x^*]=\varphi(x)$.

Another way to read the Yoneda Lemma goes as it follows: for any $\V$-module $\psi:X\modto 1$, seen also as an element of $[X^\op,\V]$, one has $\psi^*\cdot(\yoneda_X)_*=\psi$. If, moreover, $\psi$ has a left adjoint $\varphi$, then also $\yoneda_X^*\cdot\psi_*=[\psi,(-)^*]=\varphi$, and therefore $\psi^*\ge\psi\cdot\yoneda_X^*$ and $\psi_*\ge(\yoneda_X)_*\cdot\varphi$. Hence, Lemma \ref{lem:EqAdjMod} implies
\begin{lemma}\label{lem:PsiRepresentPsi}
For every adjunction $(\varphi:1\modto X)\dashv(\psi:X\modto 1)$, $\psi^*=\psi\cdot\yoneda_X^*$ and $\psi_*=(\yoneda_X)_*\cdot\varphi$.
\end{lemma}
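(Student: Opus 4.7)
The plan is to apply Lemma \ref{lem:EqAdjMod}, so what I need is two adjunctions together with two comparison inequalities.

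The first adjunction is $\psi_*\dashv\psi^*$, which follows immediately once we view $\psi$ as a $\V$-functor $1\to[X^\op,\V]$ under the identification $[X^\op,\V]\cong\Mod{\V}(X,1)$ recalled just above. The second adjunction is $(\yoneda_X)_*\cdot\varphi\dashv\psi\cdot\yoneda_X^*$, which is simply the composite of the hypothesised adjunction $\varphi\dashv\psi$ with the Yoneda adjunction $(\yoneda_X)_*\dashv\yoneda_X^*$ in $\Mod{\V}$.

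It remains to produce the inequalities $(\yoneda_X)_*\cdot\varphi\le\psi_*$ and $\psi\cdot\yoneda_X^*\le\psi^*$. Both are obtained by a one-line manipulation of the two Yoneda-style identities recorded just before the statement of the lemma, namely $\psi^*\cdot(\yoneda_X)_*=\psi$ (valid for every $\psi$) and $\yoneda_X^*\cdot\psi_*=\varphi$ (valid because $\psi$ admits the left adjoint $\varphi$), together with the counit inequality $(\yoneda_X)_*\cdot\yoneda_X^*\le 1_{[X^\op,\V]}$ coming from $(\yoneda_X)_*\dashv\yoneda_X^*$. Concretely, post-composing the first identity on the right with $\yoneda_X^*$ gives $\psi\cdot\yoneda_X^*=\psi^*\cdot(\yoneda_X)_*\cdot\yoneda_X^*\le\psi^*$, and pre-composing the second identity on the left with $(\yoneda_X)_*$ gives $(\yoneda_X)_*\cdot\varphi=(\yoneda_X)_*\cdot\yoneda_X^*\cdot\psi_*\le\psi_*$.

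With the two adjunctions and the two inequalities now in place, Lemma \ref{lem:EqAdjMod} immediately promotes the two inequalities to the required equalities. There is no real obstacle: the only care needed is to apply the counit on the correct side so that the inequalities run in the direction demanded by Lemma \ref{lem:EqAdjMod}, and to remember that the two Yoneda-style identities used as the starting point have already been justified in the paragraph preceding the statement.
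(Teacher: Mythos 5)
Your proof is correct and is essentially the paper's own argument: the paper derives the same two inequalities $\psi^*\ge\psi\cdot\yoneda_X^*$ and $\psi_*\ge(\yoneda_X)_*\cdot\varphi$ from the identities $\psi^*\cdot(\yoneda_X)_*=\psi$ and $\yoneda_X^*\cdot\psi_*=\varphi$ and then invokes Lemma \ref{lem:EqAdjMod}. You merely make explicit the counit manipulation and the two adjunctions that the paper leaves implicit, which is a faithful (and slightly more detailed) rendering of the same proof.
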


Finally, we note that every morphism of quantales $F:\V\to\W$ induces also a functor $F:\Mod{\V}\to\Mod{\W}$ which extends $F:\Cat{\V}\to\Cat{\W}$ in the sense that both diagrams
\begin{align}\label{eq:CommDiagF*}
\xymatrix{\Mod{\V}\ar[r]^F & \Mod{\W}\\
 \Cat{\V}\ar[u]^{(-)_*}\ar[r]_F & \Cat{\W}\ar[u]_{(-)_*}}
&&
\xymatrix{\Mod{\V}\ar[r]^F & \Mod{\W}\\
 \Cat{\V}^\op\ar[u]^{(-)^*}\ar[r]_{F^\op} & \Cat{\W}^\op\ar[u]_{(-)^*}}
\end{align}
commute. Here, for a $\V$-module $\varphi:(X,a)\modto(Y,b)$, $F\varphi$ is defined as the $\W$-module of type $F(X,a)\modto F(Y,b)$ given by the composite
\[
 X\times Y\xrightarrow{\;\varphi\;}\V\xrightarrow{\;F\;}\W.
\]
Furthermore, $F$ is even locally monotone meaning here that $\varphi\le\varphi'$ implies $F\varphi\le F\varphi'$, and therefore one has $F\varphi\dashv F\psi$ in $\Mod{\W}$ for every adjunction $\varphi\dashv\psi$ in $\Mod{\V}$.

\section{Probabilistic metric spaces}\label{sect:ProbMetSp}

\subsection{The quantale $\Delta$ of distribution functions}\label{subsect:Delta}

In this paper we are particularly interested in
\[
\Delta=\{f:[0,\infty]\to [0,1]\mid f \text{ is monotone and } f(x)=\bigvee_{y<x} f(y) \},
\]
which is a complete lattice with the point-wise order. To see that $\Delta$ is completely distributive, consider the maps $f_{\delta,u} \in \Delta$, with $0\leq u \leq 1$ and $0\leq \delta$, defined by:
\[
f_{\delta,u}(t)=
\begin{cases}
0 & \text{ if } 0\leq t \leq \delta,\\
u & \text{ if } \delta < t.
\end{cases}
\]
Then for any $f\in\Delta$ one has
\[
 f_{\delta,u}\ll f \iff u<f(\delta),
\]
and $f=\bigvee\{f_{\delta,u}\mid f_{\delta,u}\ll f\}$.

The complete lattice $\Delta$ becomes a quantale where, for $f,g\in\Delta$ and $t\in[0,\infty]$,
\[
 f\otimes g(t)=\bigvee_{r+s\le t}f(r)\cdot g(s).
\]
One easily verifies that $\otimes$ is associative and commutative, and that
\[
 \eps:=f_{0,1}:[0,\infty]\to [0,1],\, t\mapsto
\begin{cases}
 0 & \text{if }t=0,\\
 1 & \text{else}
\end{cases}
\]
is a neutral element for $\otimes$. Furthermore, $f_{\delta,u}\otimes f_{\delta',u'}=f_{\delta+\delta',u\cdot u'}$.

For any $f\in\Delta$, $f\otimes-:\Delta\to\Delta$ preserves suprema since $u\cdot-:[0,1]\to[0,1]$ does so, for all $u\in[0,1]$. Therefore $f\otimes-$ has a right adjoint $\hom(f,-):\Delta\to\Delta$, where (with $g\in\Delta$)
\[
\hom(f,g)=\bigvee\{h \in \Delta \mid f \otimes h \leq g\}
=\bigvee\{h \in \Delta\mid \forall r,s,t\in[0,\infty]\,.\,(r+s\le t\,\Rw\, f(r) \cdot h(s) \leq g(t))\}.
\]
If $f=f_{\delta,u}$, then $\hom(f_{\delta,u},g)$ is given by the supremum over all those $h\in\Delta$ which satisfy, for all $t\in[0,\infty]$, $u\cdot h(t\trunminus\delta)\le g(t)$, which by adjunction is equivalent to $h(t\trunminus\delta)\le g(t)\trundiv u$. If, moreover, $g=f_{\delta',u'}$, then this supremum is actually obtained for $h=f_{\delta'\trunminus\delta,u'\trundiv u}$, that is, $\hom(f_{\delta,u},f_{\delta',u'})=f_{\delta'\trunminus\delta,u'\trundiv u}$.

We call $f\in\Delta$ \emph{finite} if $f(\infty)=1$. Certainly, if $f$ is finite, then so is every $g\in\Delta$ with $f\le g$; and one also easily verifies that $f\otimes g$ is finite if both $f,g\in\Delta$ are so.

\subsection{Probabilistic metric spaces}

A probabilistic metric space \cite{Men42,SS83} is classically defined relative to a so-called \emph{t-norm}, which is nothing but a quantale structure $*:[0,1]\times[0,1]\to[0,1]$ on $[0,1]$ with neutral element $1$. Given a t-norm $*$, a \emph{probabilistic metric space} is a set $X$ equipped with a distance function $d:X\times X\times[0,\infty]\to[0,1]$ subject to
\begin{enumerate}
\item\label{cond:ProbMet1} $d(x,y,-):[0,\infty]\to[0,1]$ is left continuous (that is, $d(x,y,t)=\bigvee_{s<t}d(x,y,s)$),
\item\label{cond:ProbMet2} $d(x,x,t)=1$ for $t>0$,
\item\label{cond:ProbMet3} $d(x,y,r)*d(y,z,s)\le d(x,z,r+s)$,
\item\label{cond:ProbMet4} $d(x,y,t)=1=d(y,x,t)$ for all $t>0$ implies $x=y$,
\item\label{cond:ProbMet5} $d(x,y,t)=d(y,x,t)$ for all $t$,
\item\label{cond:ProbMet6} $d(x,y,\infty)=1$,
\end{enumerate}
for all $x,y\in X$ and $r,s\in[0,\infty]$. The intended meaning of $d(x,y,t)=u$ is that $u$ is the ``probability of the distance from $x$ to $y$ is less then $t$''. For the sake of simplicity, in the sequel we will only consider the case of $*$ being the usual multiplication ``$\cdot$'' on $[0,1]$. Clearly, \eqref{cond:ProbMet1} just states that the exponential mate $a:=\mate{d}:X\times X\to[0,1]^{[0,\infty]}$ of $d$ takes values in $\Delta$. Then \eqref{cond:ProbMet2} and \eqref{cond:ProbMet3} are indeed equivalent to the defining properties
\begin{align*}
 \eps&\le a(x,y), & a(x,y)\otimes a(y,z)&\le a(x,z)
\end{align*}
of a $\Delta$-category. In the sequel we will use the term ``probabilistic metric space'' as a synonym for $\Delta$-category, hence we do not insist on the conditions \eqref{cond:ProbMet4}, \eqref{cond:ProbMet5} and \eqref{cond:ProbMet6}. However, we note that a $\Delta$-category $X=(X,a)$ satisfies \eqref{cond:ProbMet4} if and only if $X$ is separated, and $X$ satisfies \eqref{cond:ProbMet5} if and only if $X$ is symmetric. Similarly to the nomenclature for metric spaces, we call a $\Delta$-category $X=(X,a)$  \emph{finitary} if $X$ satisfies \eqref{cond:ProbMet6}, i.e.\ if $a(x,y)\in\Delta$ is finite for all $x,y\in X$. Intuitively, \eqref{cond:ProbMet6} states that the affirmation ``the distance from $x$ to $y$ is finite'' has probability $1$.

Finally, a $\Delta$-functor $f:(X,a)\to(Y,b)$ is a map satisfying
\begin{equation}\label{cond:DeltaFun}
 a(x,y)(t)\le b(f(x),f(y))(t)
\end{equation}
for all $x,y\in X$ and $t\in[0,\infty]$. In other words, the ``probability of the distance from $x$ to $y$ is less than $t$'' is less or equal than the ``probability of the distance from $f(x)$ to $f(y)$ is less than $t$''. We write $\PROBMET$ for the category of probabilistic metric spaces and maps satisfying \eqref{cond:DeltaFun}, that is, $\PROBMET\simeq\Cat{\Delta}$.

\begin{remark}
The notion of probabilistic metric spaces is closely related to the one of fuzzy metric space as defined in \cite{GV94}. The main difference appears in condition \eqref{cond:ProbMet1}: in \cite{GV94}, the mapping $d(x,y,-)$ is even required to be continuous. However, with this modification the set of all distribution functions is not any more complete in the pointwise order. As a consequence, many nice properties of probabilistic metric spaces are not shared by fuzzy metric spaces, for instance, there exist fuzzy metric spaces which do not admit a Cauchy completion (see \cite{GR02}).
\end{remark}

\subsection{Comparison with metric spaces}

The quantale $[0,\infty]$ embeds canonically into $\Delta$ via
\[
 I_\infty:[0,\infty]\to\Delta,\,x\mapsto f_{x,1}.
\]
Moreover, for all $x,y\in[0,\infty]$,
\begin{align*}
 f_{0,1}&=\eps, & f_{x+y,1}&=f_{x,1}\otimes f_{y,1},
\end{align*}
and $I_\infty$ preserves suprema since it has a right adjoint
\[
 P_\infty:\Delta\to[0,\infty],\,f\mapsto\inf\{x\in[0,\infty]\mid f(x)=1\}.
\]
Consequently, $I_\infty$ is a morphism of quantales. The right adjoint $P_\infty$ satisfies
\begin{align*}
 P_\infty(\eps)&=0, & P_\infty(f\otimes g)=P_\infty(f)+P_\infty(g),
\end{align*}
for all $f,g\in\Delta$; however, $P_\infty$ does not preserve suprema and therefore is only a lax morphism of quantales. Furthermore, $I_\infty$ has also a left adjoint
\[
 O_\infty:\Delta\to[0,\infty],\,f\mapsto\sup\{x\in[0,\infty]\mid f(x)=0\},
\]
which -- being left adjoint -- preserves suprema and also satisfies
\begin{align*}
 O_\infty(\eps)&=0, & O_\infty(f\otimes g)=O_\infty(f)+O_\infty(g),
\end{align*}
for all $f,g\in\Delta$. Therefore $O_\infty$ is a morphism of quantales. Finally, from
\[
 \xymatrix{[0,\infty]\ar[rr]|-{I_\infty} &&
\Delta\ar@/^3.5ex/[ll]_\perp^-{P_\infty}\ar@/_3.5ex/[ll]^\perp_-{O_\infty}}
\]
one obtains the chain of functors

\[
 \xymatrix{\MET\ar[rr]|-{I_\infty} &&
\PROBMET.\ar@/^3.5ex/[ll]_\perp^-{P_\infty}\ar@/_3.5ex/[ll]^\perp_-{O_\infty}}
\]

\section{Cauchy completeness}

\subsection{Cauchy complete $\V$-categories}

In Subsection \ref{subsect:Vmodules} we mentioned already that each $\V$-functor $f:(X,a)\to(Y,b)$ induces an adjoint pair $f_*\dashv f^*$ of $\V$-modules. One of the amazing discoveries of \cite{Law_MetLogClo} is that the reverse affirmation (every adjoint pair of $\V$-modules is induced by a $\V$-functor) is ultimately linked to Cauchy completeness. In fact, in \cite{Law_MetLogClo} it is shown that
\begin{theorem}
A metric space $X$ (viewed as a $[0,\infty]$-category) is Cauchy complete if and only if every adjunction $\varphi\dashv\psi$, $\varphi:Y\modto X$, $\psi:X\modto Y$ in $\Mod{[0,\infty]}$ is of the form $f_*\dashv f^*$, for a non-expansive map $f:Y\to X$.
\end{theorem}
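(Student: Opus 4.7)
My plan is to handle both directions by reducing to the case $Y = E$, where classical Cauchy sequences in $X$ are in correspondence with left adjoint modules of type $E \modto X$.

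For the $(\Rightarrow)$ direction, assume $X$ is Cauchy complete in the classical sense and let $\varphi \dashv \psi$ with $\varphi \colon Y \modto X$ and $\psi \colon X \modto Y$. For each $y \in Y$, viewed as a $\V$-functor $y \colon E \to Y$, one has $y_* \dashv y^*$, and composing with $\varphi \dashv \psi$ yields a pointwise adjunction $(\varphi \cdot y_*) \dashv (y^* \cdot \psi)$ between $E$ and $X$; by the calculation rules of Subsection \ref{subsect:Vmodules}, these composites evaluate to $\varphi(y,-)$ and $\psi(-,y)$. The adjunction unit forces $\inf_x \bigl(\varphi(y,x) + \psi(x,y)\bigr) = 0$, so I may pick $x_n \in X$ with $\varphi(y,x_n) + \psi(x_n,y) \to 0$; the counit inequality $a(x_m, x_n) \leq \psi(x_m, y) + \varphi(y, x_n)$ then shows that $(x_n)$ is Cauchy, hence converges to some $f(y) \in X$. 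The relations $\varphi(y,x) = a(f(y),x)$ and $\psi(x,y) = a(x,f(y))$ follow by passing the module actions to the limit, together with the fact that $\varphi(y,-)$ and $\psi(-,y)$ are non-expansive in their variables. Non-expansiveness of $f$ is then read off the right $a$-action: evaluating $b(y,y') + \varphi(y', x) \geq \varphi(y, x)$ at $x = f(y')$ gives $a(f(y), f(y')) \leq b(y,y')$. By construction $f_* = \varphi$ and $f^* = \psi$.

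For the $(\Leftarrow)$ direction, suppose every adjoint pair in $\Mod{[0,\infty]}$ is of the form $f_* \dashv f^*$, and let $(x_n)$ be a Cauchy sequence in $X$. I encode it as an adjoint pair between $E$ and $X$ via
\[
\varphi \colon E \modto X,\quad \varphi(\star, x) = \lim_{n} a(x_n, x), \qquad \psi \colon X \modto E,\quad \psi(x, \star) = \lim_{n} a(x, x_n).
\]
The limits exist because $(a(x_n, x))_n$ and $(a(x, x_n))_n$ are Cauchy in $\R$ by the triangle inequality. The module actions translate into the triangle inequality passed to the limit, and the adjunction inequalities amount to $\inf_x \bigl(\varphi(\star, x) + \psi(x, \star)\bigr) = 0$ (witnessed by $x = x_n$) together with $a(x, x') \leq \psi(x, \star) + \varphi(\star, x')$ (a further triangle step). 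The hypothesis produces $f \colon E \to X$ with $\varphi = f_*$ and $\psi = f^*$, and then $a(f(\star), x_n) = \varphi(\star, x_n) \to 0$ exhibits $f(\star)$ as a limit of $(x_n)$.

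The main obstacle I anticipate is making these limit arguments rigorous on both sides. In the $(\Leftarrow)$ direction, one must confirm that $\lim_n a(x_n, -)$ and $\lim_n a(-, x_n)$ are not only well-defined but assemble into strict $\V$-modules satisfying the action and adjunction inequalities \emph{exactly}, not merely asymptotically; in the $(\Rightarrow)$ direction, the representation equations $\varphi(y, -) = a(f(y), -)$ must hold for every $x \in X$, not only along the extracted subsequence, which forces one to use continuity of $\varphi(y, -)$ and $\psi(-, y)$ in the metric $a$. A further subtlety in the non-separated setting is that $f(y)$ is determined only up to the equivalence $\simeq$ induced by $a$; one picks a representative at each $y$, and the non-expansiveness computation is insensitive to that choice.
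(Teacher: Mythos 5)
Your proposal is correct and follows essentially the route the paper itself indicates: reduce to $Y=E$ (you do this by composing with $y_*\dashv y^*$ pointwise in $y$), pass from an adjunction to a Cauchy sequence by choosing witnesses $x_n$ with $\varphi(y,x_n)+\psi(x_n,y)\to 0$, and pass from a Cauchy sequence to an adjunction via $\varphi(\star,x)=\lim_n a(x_n,x)$, $\psi(x,\star)=\lim_n a(x,x_n)$, which are exactly the paper's $\sup_N\inf_{n\ge N}$ formulas. The paper only cites Lawvere and sketches this bijection rather than proving it, so your write-up is a faithful elaboration of the same argument; the subtleties you flag (exact rather than asymptotic module inequalities, both $a(f(y),x_n)\to 0$ and $a(x_n,f(y))\to 0$, and the choice of $f(y)$ up to $\simeq$) are the right ones and are all handled by the triangle-inequality estimates you give.
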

It is not hard to see that it is enough to consider $Y=1$ the one-point space in the theorem above. By definition $\varphi:1\modto X$ is left adjoint to $\psi:X\modto 1$ precisely when
\begin{align*}
 0&\geqslant\inf_{x\in X}\varphi(x)+\psi(x), & \psi(x)+\varphi(y)\geqslant d(x,y)
\end{align*}
for all $x,y\in X$, where $d$ denotes the metric of $X$. The main observation here is that there is a bijection between adjunctions $\varphi\dashv\psi$ and equivalence classes of Cauchy sequences, realised by
\begin{align*}
 (x_n)_{_\N}\quad\text{Cauchy sequence in $X$} &\mapsto
\begin{cases}
\varphi:1\modto X, &\varphi(x)=\displaystyle{\sup_{n\in\N}\inf_{k\geqslant n}d(x_n,x)}\\
\psi:X\modto 1, &\psi(x)=\displaystyle{\sup_{n\in\N}\inf_{k\geqslant n}d(x,x_n)}\\
\varphi\dashv\psi
\end{cases}\\
\varphi\dashv\psi &\mapsto
(x_n)_{_\N}\quad\text{with $x_n$ such that $\varphi(x_n)+\psi(x_n)\leqslant\frac{1}{n}$}.
\end{align*}
Furthermore, a Cauchy sequence $(x_n)_{_\N}$ converges to $x$ if and only if the corresponding adjunction $\varphi\dashv\psi$ is of the form $x_*\dashv x^*$.

In general, one says that a $\V$-category $X$ is \emph{Cauchy complete} whenever every adjunction
\[
 (\varphi:Y\modto X)\dashv(\psi:X\modto Y)
\]
is representable by some $\V$-functor $f:Y\to X$ in the sense that $\varphi=f_*$ and $\psi=f^*$. As above, it is enough to consider the case $Y=E=(1,k)$. Since adjoints determine each other, $X$ is Cauchy complete whenever every left adjoint $\V$-module $\varphi:E\modto X$ is of the form $\varphi=x_*$, equivalently, whenever every right adjoint $\V$-module $\psi:X\modto E$ is of the form $\psi=x^*$.

For any $\V$-category $X$, $[X^\op,\V]$ is Cauchy complete where a right adjoint $\V$-module $\psi:[X^\op,\V]\modto E$ is represented by $\psi\cdot(\yoneda_X)_*\in[X^\op,\V]$. Furthermore, when writing
\[
 \widetilde{X}=\{\psi:X\modto E\mid\text{$\psi$ is right adjoint}\}
\]
for the $\V$-subcategory of $[X^\op,\V]$ defined by all right adjoint $\V$-modules, $X$ is Cauchy complete if and only if the restriction $\yoneda_X:X\to\widetilde{X}$ of the Yoneda embedding to $\widetilde{X}$ is surjective. In fact, $\yoneda_X:X\to\widetilde{X}$ provides a Cauchy completion of the $\V$-category $X$ as we recall in the next Subsection.

Finally, we study briefly how functors induced by morphisms of quantales interact with Cauchy completeness. To this end, let $F:\V\to\W$ be a morphism of quantales $\V=(\V,\otimes,k)$ and $\W=(\W,\oplus,l)$ and let $X$ be a $\V$-category. Recall that $F$ induces functors $F:\Cat{\V}\to\Cat{\W}$ and $F:\Mod{\V}\to\Mod{\W}$, and the latter one sends adjunctions to adjunctions. Therefore one obtains a commutative diagram
\[
 \xymatrix{\{\varphi:E\modto X\mid \varphi\text{ is left adjoint}\}\ar[rr]^\Phi &&
 \{\varphi':E=FE\modto FX\mid \varphi'\text{ is left adjoint}\}\\
 & |X|=|FX|,\ar[ul]^{(-)_*}\ar[ur]_{(-)_*}}
\]
where $\Phi\varphi=F\varphi$ and $|Y|$ denotes the underlying set of a $\V$-category $Y$. Since $X$ (respectively $FX$) is Cauchy complete if and only if the map $(-)_*$ is surjective, we find
\begin{proposition}\label{prop:PreservCauchyViaPhi}
\begin{enumerate}
\item If $FX$ is Cauchy complete and $\Phi$ is injective, then $X$ is Cauchy complete.
\item If $X$ is Cauchy complete and $\Phi$ is surjective, then $FX$ is Cauchy complete.
\end{enumerate}
\end{proposition}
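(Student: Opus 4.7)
The plan is a straightforward diagram chase in the commutative triangle displayed just before the statement. Write $(-)_*^X \colon |X|\to\{\varphi\colon E\modto X \mid \varphi\text{ left adjoint}\}$ and similarly $(-)_*^{FX}$ for the corresponding map on the $\W$-side. By definition of Cauchy completeness, $X$ is Cauchy complete iff $(-)_*^X$ is surjective, and likewise for $FX$; and the commutativity of the triangle reads $\Phi\circ(-)_*^X=(-)_*^{FX}$, since $\Phi(x_*)=F(x_*)=(x_*)^{FX}$.

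For part (i), I would start with a left adjoint $\V$-module $\varphi\colon E\modto X$ and exhibit some $x\in |X|$ with $\varphi=x_*$. Since $F$ preserves adjunctions in the module category, $\Phi\varphi=F\varphi$ is a left adjoint $\W$-module $E\modto FX$. Cauchy completeness of $FX$ then provides $x\in|FX|=|X|$ with $F\varphi=(x_*)^{FX}=\Phi(x_*^X)$, and injectivity of $\Phi$ yields $\varphi=x_*^X$.

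For part (ii), take a left adjoint $\W$-module $\varphi'\colon E\modto FX$. Surjectivity of $\Phi$ supplies a left adjoint $\V$-module $\varphi\colon E\modto X$ with $F\varphi=\varphi'$. Cauchy completeness of $X$ then gives $x\in |X|=|FX|$ such that $\varphi=x_*^X$, and applying $\Phi$ one obtains $\varphi'=F\varphi=\Phi(x_*^X)=(x_*)^{FX}$, so $FX$ is Cauchy complete.

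There is no real obstacle here: the only ingredient beyond the formal chase is the fact that $F\colon\Mod{\V}\to\Mod{\W}$ is locally monotone and therefore sends adjunctions to adjunctions, which is explicitly recorded in the sentence following diagram \eqref{eq:CommDiagF*}, together with the commutativity of the left-hand square of that diagram, which is precisely what makes $\Phi\circ(-)_*^X=(-)_*^{FX}$. Everything else is the observation that a commutative triangle $g = h\circ f$ of maps lets one pull surjectivity of $g$ back to surjectivity of $f$ when $h$ is injective, and push surjectivity of $f$ forward to surjectivity of $g$ when $h$ is surjective.
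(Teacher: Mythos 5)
Your proof is correct and is essentially the paper's own argument: the paper sets up the same commutative triangle $\Phi\circ(-)_*=(-)_*$ and immediately deduces both statements from the equivalence of Cauchy completeness with surjectivity of $(-)_*$, exactly as you do. The only ingredient beyond the diagram chase, that $F:\Mod{\V}\to\Mod{\W}$ sends adjunctions to adjunctions, is invoked identically in both.
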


Certainly, if $F:\V\to\W$ is injective, then $\Phi$ is injective for every $\V$-category $X$. In order to obtain surjectivity of $\Phi$, we also assume that there is a morphism of quantales $G:\W\to\V$. Hence, $(\varphi':E\modto FX)\dashv(\psi':FX\modto E)$ in $\Mod{\W}$ gives $(G\varphi':E\modto GFX)\dashv(G\psi':GFX\modto E)$ in $\Mod{\V}$, and $G\varphi'$ is of type $E\modto X$ provided that $GF=1_\V$. Moreover, if either $FG\le 1_\W$ or $FG\ge 1_\W$, then $FG\varphi'\le\varphi'$ and $FG\psi'\le\psi'$ or $FG\varphi'\ge\varphi'$ and $FG\psi'\ge\psi'$, and either way Lemma \ref{lem:EqAdjMod} implies $FG\varphi'=\varphi'$. A similar argument can be used if $GF\le 1_\V$ and $FG\ge 1_\W$ (that is, $G\dashv F$), since in this case the identity map on $X$ is a $\V$-functor of type $\gamma:GFX\to X$, and $(G\varphi':E\modto GFX)\dashv(G\psi':GFX\modto E)$ can be composed with $\gamma_*\dashv\gamma^*$ to yield $(\gamma_*\cdot G\varphi':E\modto X)\dashv(G\psi'\cdot\gamma^*:X\modto E)$. Furthermore, $F\gamma$ is the identity on $FX$ since $FGF=F$, and $\Phi(\gamma_*\cdot G\varphi')=\varphi'$ follows again from Lemma \ref{lem:EqAdjMod}.

\begin{corollary}\label{cor:XtoFXCauchy}
Let $F:\V\to\W$ and $G:\W\to\V$ be morphisms of quantales and assume that either $G\dashv F$ or that $F\dashv G$ and $F$ is injective. Then $FX$ is Cauchy complete provided that $X$ is Cauchy complete.
\end{corollary}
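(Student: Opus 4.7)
The plan is to apply Proposition \ref{prop:PreservCauchyViaPhi}(2): the conclusion will follow once I show that the map
\[
\Phi:\{\varphi:E\modto X\mid\varphi\text{ left adjoint}\}\to\{\varphi':E\modto FX\mid\varphi'\text{ left adjoint}\},\quad\varphi\mapsto F\varphi,
\]
is surjective. So I fix an adjunction $\varphi'\dashv\psi'$ with $\varphi':E\modto FX$ and $\psi':FX\modto E$ in $\Mod{\W}$, and I aim to exhibit $x\in X$ with $(x)_* = \varphi'$ (equivalently, via \eqref{eq:CommDiagF*}, with $F(x_*)=\varphi'$). Since $G$ is a morphism of quantales, applying $G$ to the adjunction yields $G\varphi'\dashv G\psi'$ in $\Mod{\V}$, with $G\varphi':E\modto GFX$.

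The next step is to turn this into an adjunction of modules between $E$ and $X$, so that Cauchy completeness of $X$ can be invoked. I distinguish the two cases. If $G\dashv F$, then $GF\le 1_\V$, so the identity map is a $\V$-functor $\gamma:GFX\to X$; composing with $\gamma_*\dashv\gamma^*$ gives $\gamma_*\cdot G\varphi'\dashv G\psi'\cdot\gamma^*$, which is now an adjunction in $\Mod{\V}$ between $E$ and $X$. If instead $F\dashv G$ with $F$ injective, then $FG\le 1_\W$ and $GF\ge 1_\V$; applying $F$ to the inequality $GFv\ge v$ and using $FGF\le F$ gives $FGFv=Fv$, and injectivity of $F$ yields $GF=1_\V$. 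Then $GFX=X$ and $G\varphi'\dashv G\psi'$ is already an adjunction of the required form (with $\gamma=1_X$, so the following argument applies uniformly).

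By Cauchy completeness of $X$, there exists $x\in X$ with $\gamma_*\cdot G\varphi'=x_*$ and $G\psi'\cdot\gamma^*=x^*$. Applying $F$ and using that $F\gamma$ is the identity on $FX$ (because $FGF=F$), I get $FG\varphi'=F(x_*)=(x)_*$ via the commutative diagram \eqref{eq:CommDiagF*} (and dually for $\psi'$). It remains only to identify $FG\varphi'$ with $\varphi'$. In case $G\dashv F$ we have $FG\ge 1_\W$, so $FG\varphi'\ge\varphi'$ and $FG\psi'\ge\psi'$; in the other case $FG\le 1_\W$, so the two inequalities are reversed. Either way, $FG\varphi'\dashv FG\psi'$ and $\varphi'\dashv\psi'$ satisfy the hypotheses of Lemma \ref{lem:EqAdjMod}, which forces $FG\varphi'=\varphi'$. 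Thus $(x)_*=\varphi'$, so $\Phi$ is surjective and Proposition \ref{prop:PreservCauchyViaPhi}(2) finishes the argument.

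The main obstacle I expect is purely bookkeeping: the adjunction lives in $\Mod{\W}$ on $FX$, but Cauchy completeness of $X$ is a statement in $\Mod{\V}$ on $X$, and the transport between them via $F$ and $G$ changes the underlying $\V$-structure from $a$ to $GFa$. The technical device that resolves this is the $\V$-functor $\gamma:GFX\to X$ (which collapses to the identity when $GF=1_\V$), together with Lemma \ref{lem:EqAdjMod}, which converts the one-sided comparison $FG\varphi'\lessgtr\varphi'$ coming from $FG\lessgtr 1_\W$ into an actual equality using only the fact that both modules are left adjoint.
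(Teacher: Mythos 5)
Your argument is correct and follows the paper's own proof essentially verbatim: you establish surjectivity of $\Phi$ by transporting the adjunction along $G$, using $GF=1_\V$ (derived from $F\dashv G$ plus injectivity of $F$) in one case and the comparison $\V$-functor $\gamma:GFX\to X$ in the other, and then invoke Lemma \ref{lem:EqAdjMod} to identify $FG\varphi'$ with $\varphi'$ before applying Proposition \ref{prop:PreservCauchyViaPhi}(2). No gaps; this is the same route the authors take.
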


\subsection{Topology in a $\V$-category}

To every metric on a set $X$ one associates a topology by putting
\[
 x\in\overline{M}:\iff\text{there is some (Cauchy) sequence $(x_n)_{_\N}$ in $M$ with $(x_n)_{_\N}\to x$,}
\]
for all $M\subseteq X$ and $x\in X$. Rephrased in the language of $\V$-modules, $x$ is in the closure of $M$ if and only if $x$ represents an adjoint pair of $\V$-modules on $M$, and this amounts to saying that
\[
 (i^*\cdot x_*:E\modto M)\dashv(x^*\cdot i_*):M\modto E,
\]
where we consider $M$ as a sub-$\V$-category of $X$ and $i:M\to X$ denotes the inclusion $\V$-functor. This latter formulation defines indeed a closure operator (not just for a metric space but) for any $\V$-category $X$ which was studied in \cite{HT_LCls}. Below we recall some key facts; if not stated otherwise, their proofs can be found in \cite{HT_LCls}.
\begin{proposition}
Let $X=(X,a)$ be a $\V$-category, $M\subseteq X$ and $x\in X$. Then
\[
 x\in\overline{M}\iff k\le\bigvee_{y\in M}a(x,y)\otimes a(y,x).
\]
\end{proposition}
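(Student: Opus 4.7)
The plan is to unfold the definition $x \in \overline{M}$ step by step and reduce the adjointness condition to a single inequality.

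First I would compute the two modules appearing in the definition. Writing $a_M$ for the restriction of $a$ to $M$ (so that $M = (M,a_M)$ is viewed as a sub-$\V$-category via the inclusion $i$), and using the calculation rules $i^*\cdot\varphi(z,y)=\varphi(z,i(y))=\varphi(z,y)$ and $\psi\cdot i_*(y,z)=\psi(i(y),z)=\psi(y,z)$, the module $\varphi:=i^*\cdot x_*:E\modto M$ is
\[
\varphi(\star,y)=\bigvee_{z\in X}a(x,z)\otimes a(z,y),
\]
for $y\in M$. By transitivity of $a$ this supremum is $\le a(x,y)$, and taking $z=y$ (and using $k\le a(y,y)$) gives the reverse inequality, so $\varphi(\star,y)=a(x,y)$. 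An analogous calculation shows $\psi:=x^*\cdot i_*:M\modto E$ is $\psi(y,\star)=a(y,x)$.

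Next I would spell out what $\varphi\dashv\psi$ means in this setting. Recall that the identity module on $E=(1,k)$ assigns the value $k$, and the identity module on $M$ is $a_M$. The counit inequality $\varphi\cdot\psi\le a_M$ unfolds to
\[
a(y,x)\otimes a(x,y')\le a(y,y'),
\]
for all $y,y'\in M$, which is automatic by transitivity of $a$ in the ambient $\V$-category $X$. Hence the adjunction reduces to the single unit inequality $k\le\psi\cdot\varphi(\star,\star)$, and
\[
\psi\cdot\varphi(\star,\star)=\bigvee_{y\in M}\varphi(\star,y)\otimes\psi(y,\star)=\bigvee_{y\in M}a(x,y)\otimes a(y,x),
\]
which gives exactly the claimed equivalence.

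There is no real obstacle here: the proof is essentially a matter of carefully transcribing the definitions. The only subtlety is the observation that the counit condition is automatic, so the adjunction degenerates to its unit half; this is what makes the characterization so clean.
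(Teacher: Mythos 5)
Your proof is correct and is exactly the expected argument: the paper itself does not spell out a proof (it defers to the cited reference on Lawvere completion via closure), but the computation there is the same unfolding of $i^*\cdot x_*$ and $x^*\cdot i_*$ via the rules $f^*\cdot\varphi(z,y)=\varphi(z,f(y))$ and $\psi\cdot f_*(y,z)=\psi(f(y),z)$, together with the observation that the counit inequality $a(y,x)\otimes a(x,y')\le a(y,y')$ holds automatically by transitivity, leaving only the unit condition $k\le\bigvee_{y\in M}a(x,y)\otimes a(y,x)$. No gaps.
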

By the proposition above, for $x,x'\in\overline{M}$ one has
\[
 a(x,x')\le \bigvee_{y\in M}a(x,y)\otimes a(y,x)\otimes a(x,x')
\le \bigvee_{y\in M}a(x,y)\otimes a(y,x')\le a(x,x'),
\]
hence $a(x,x')=\bigvee_{y\in M}a(x,y)\otimes a(y,x')$.

\begin{proposition}\label{prop:PropertiesClosure}
Let $f:X\to Y$ be a $\V$-functor, $M,M'\subseteq X$ and $N\subseteq
Y$. Then
\begin{enumerate}
\item $M\subseteq\overline{M}$,
\item $M\subseteq M'$ implies $\overline{M}\subseteq\overline{M'}$,
\item $\overline{\varnothing}=\varnothing$ and
$\overline{\overline{M}}=\overline{M}$,
\item $f(\overline{M})\subseteq \overline{f(M)}$ and
$\overline{f^{-1}(N)}\subseteq f^{-1}(\overline{N})$,
\item $\overline{M\cup M'}=\overline{M}\cup\overline{M'}$ provided that $k\leq u\vee v$ implies $k\leq u$ or $k\leq v$ for all $u,v\in\V$.
\end{enumerate}
Furthermore, $\overline{(-)}$ is hereditary, that is, for $M\subseteq Z\subseteq X$ where we consider $Z$ as a sub-$\V$-category of $X$, $\overline{M}$ calculated in the $\V$-category $Z$ is equal to $\overline{M}\cap Z$ with $\overline{M}$ calculated in the $\V$-category $X$.
\end{proposition}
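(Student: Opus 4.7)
The entire proposition is a direct consequence of the characterisation $x\in\overline{M}\iff k\le\bigvee_{y\in M}a(x,y)\otimes a(y,x)$ established in the preceding proposition; each clause reduces to an elementary manipulation of suprema, of the tensor, and of the two $\V$-category axioms $k\le a(x,x)$ and $a(x,y)\otimes a(y,z)\le a(x,z)$ (plus its dual obtained by applying the argument in the opposite category).

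For (i), set $y=x$ in the supremum and use $k\otimes k=k\le a(x,x)\otimes a(x,x)$. Clause (ii) is immediate: a larger index set yields a larger supremum. For the empty part of (iii), the empty supremum is $\bot$, and $k\not\le\bot$ in any non-degenerate quantale, so $\overline{\varnothing}=\varnothing$. Monotonicity (ii) gives $\overline{M}\subseteq\overline{\overline{M}}$, and for the converse take $x\in\overline{\overline{M}}$ and $y\in\overline{M}$; inserting $k\le\bigvee_{z\in M}a(y,z)\otimes a(z,y)$ between $a(x,y)$ and $a(y,x)$ via $k=k\otimes k$ and applying the transitivity inequality $a(x,y)\otimes a(y,z)\le a(x,z)$ together with its dual $a(z,y)\otimes a(y,x)\le a(z,x)$ yields
\[
 a(x,y)\otimes a(y,x)\le\bigvee_{z\in M}a(x,z)\otimes a(z,x);
\]
taking the supremum over $y\in\overline{M}$ gives $x\in\overline{M}$.

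For (iv), the $\V$-functor property $a(x,y)\le b(f(x),f(y))$ (and likewise in the opposite direction) pushes the sup inside $\overline{M}$ to a sup inside $\overline{f(M)}$, proving $f(\overline{M})\subseteq\overline{f(M)}$. The second inclusion of (iv) then follows by applying this to $M=f^{-1}(N)$ and combining with (ii) via $f(f^{-1}(N))\subseteq N$. For (v), the inclusion $\overline{M}\cup\overline{M'}\subseteq\overline{M\cup M'}$ is (ii); for the reverse, split $\bigvee_{y\in M\cup M'}=\bigvee_{y\in M}\vee\bigvee_{y\in M'}$ and apply the hypothesis $k\le u\vee v\Rightarrow k\le u$ or $k\le v$ to the two pieces. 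Finally, heredity is immediate: for $M\subseteq Z$ the defining formula for $\overline{M}$ only involves values $a(z,y)$ and $a(y,z)$ with $y\in M\subseteq Z$, which coincide with the restricted structure of $Z$, so the closure of $M$ computed inside $Z$ equals $\overline{M}\cap Z$.

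The only step that is more than an unpacking of the formula is the non-trivial half of idempotence in (iii); the crux there is the ``$k=k\otimes k$ insertion'' trick which lets one iterate the characterisation. Everything else is essentially a bookkeeping exercise.
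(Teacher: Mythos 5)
Your proof is correct. Note that the paper itself does not supply an argument for this proposition: it is one of the ``key facts'' whose proofs are deferred to the reference \cite{HT_LCls}, so there is no in-text proof to compare against. Your direct verification from the characterisation $x\in\overline{M}\iff k\le\bigvee_{y\in M}a(x,y)\otimes a(y,x)$ is exactly the expected route, and all five clauses plus heredity check out: the insertion of $k\le\bigvee_{z\in M}a(y,z)\otimes a(z,y)$ between $a(x,y)$ and $a(y,x)$ followed by distributing $\otimes$ over $\bigvee$ and applying transitivity twice is the standard way to get idempotence, and the second half of (iv) does follow formally from the first half together with (ii) and $f(f^{-1}(N))\subseteq N$. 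Your parenthetical caveat on $\overline{\varnothing}=\varnothing$ is also the right one to flag: the empty supremum is $\bot$, so the claim needs $k\neq\bot$, i.e.\ that $\V$ is not the one-element quantale --- an assumption the paper leaves implicit throughout.
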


Although this closure operator is in general not topological, it still allows us to introduce a notion of convergence. For an ultrafilter $\mathfrak{x}$ on $X$ and a point $x\in X$, one says that $\mathfrak{x}$ converges to $x$, written as $\mathfrak{x}\rw x$, whenever $x\in\overline{A}$ for all $A\in\mathfrak{x}$. Then a filter $\mathfrak{f}$ converges to $x$, $\mathfrak{f}\rw x$, if every ultrafilter $\mathfrak{x}$ with $\mathfrak{f}\subseteq\mathfrak{x}$ converges to $x$. In particular, for a sequence $s=(x_n)_{n\in\N}$ and a point $x\in X$, we find that
\[
 s\rw x\iff x\in\overline{\{x_n\mid n\in M\}},\text{ for all $M\subseteq\N$ infinite}.
\]

One also has the expected results linking closed subsets $M\subseteq X$ (i.e.\ $\overline{M}=M$) with Cauchy completeness: every closed subset of a Cauchy complete $\V$-category is Cauchy complete, and every Cauchy complete $\V$-subcategory of a separated $\V$-category is closed. We also note that the inclusion $\V$-functor $i:M\to X$ is fully dense (i.e.\ $i_*\cdot i^*=a$ where $X=(X,a)$) if and only if $\overline{M}=X$.

An important example is provided by the Yoneda embedding $\yoneda_X:X\to[X^\op,\V]$, since 
\[
 \overline{\yoneda_X(X)}=\widetilde{X}=\{\psi:X\modto 1\mid \psi\text{ is right adjoint}\}.
\]
Hence, $\widetilde{X}$ is Cauchy complete and $\yoneda_X:X\to\tilde{X}$ is (fully faithful and) fully dense. This makes $(\yoneda_X)_*:X\modto\tilde{X}$ an isomorphism in $\Mod{\V}$ with inverse $\yoneda_X^*:\tilde{X}\modto X$, and from that it follows at once that $\yoneda_X:X\to\tilde{X}$ has the desired universal property: for every $\V$-functor $f:X\to Y$ where $Y$ is separated and Cauchy complete, there exists a unique $\V$-functor $g:\tilde{X}\to Y$ with $g\cdot\yoneda_X=f$. In fact, $g$ can be taken as the $\V$-functor $\tilde{X}\to Y$ which represents the left adjoint $\V$-module $f_*\cdot\yoneda_X^*$, such a $\V$-functor exists since $Y$ is Cauchy complete and is unique since $Y$ is separated.

\begin{theorem}\label{thm:CauchyReflective}
The full subcategory of separated and Cauchy complete $\V$-categories is reflective in $\Cat{\V}$, where the reflection map for a $\V$-category $X$ can be chosen as $\yoneda_X:X\to\tilde{X}$.
\end{theorem}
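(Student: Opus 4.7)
The plan is to verify the three ingredients that make $\yoneda_X:X\to\tilde{X}$ the unit of a reflection: (a) $\tilde{X}$ lies in the subcategory, (b) every $\V$-functor $f:X\to Y$ with $Y$ separated and Cauchy complete factors as $f=g\cdot\yoneda_X$ for some $\V$-functor $g:\tilde{X}\to Y$, and (c) such $g$ is unique. The paragraph preceding the theorem already sketches the heart of the argument, so the task is mostly to organise the ingredients.

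For (a), separatedness of $\tilde{X}$ is inherited from $[X^\op,\V]$, whose structure $[h,h']=\bigwedge_{x\in X}\hom(h(x),h'(x))$ is anti-symmetric since $\V$ is. For Cauchy completeness I would invoke the earlier observation that $[X^\op,\V]$ is Cauchy complete (a right adjoint $\psi:[X^\op,\V]\modto E$ is represented by $\psi\cdot(\yoneda_X)_*$) together with the general fact, recalled just before the theorem, that closed sub-$\V$-categories of Cauchy complete $\V$-categories are Cauchy complete, applied to the closed subset $\tilde{X}=\overline{\yoneda_X(X)}$.

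For (b), first observe that $\yoneda_X:X\to\tilde{X}$ is both fully faithful and fully dense, so $\yoneda_X^*\cdot(\yoneda_X)_*=a$ and $(\yoneda_X)_*\cdot\yoneda_X^*=\tilde{a}$ (the hom of $\tilde{X}$); that is, $(\yoneda_X)_*$ and $\yoneda_X^*$ are mutually inverse in $\Mod{\V}$. Consequently, composing the adjunction $f_*\dashv f^*$ with this isomorphism yields an adjunction $(f_*\cdot\yoneda_X^*)\dashv((\yoneda_X)_*\cdot f^*)$ in $\Mod{\V}$. Since $Y$ is Cauchy complete, this left adjoint $\V$-module $\tilde{X}\modto Y$ is representable by a $\V$-functor $g:\tilde{X}\to Y$, i.e.\ $g_*=f_*\cdot\yoneda_X^*$. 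Composing on the right with $(\yoneda_X)_*$ gives $(g\cdot\yoneda_X)_*=g_*\cdot(\yoneda_X)_*=f_*\cdot\yoneda_X^*\cdot(\yoneda_X)_*=f_*\cdot a=f_*$; since $Y$ is separated and $(-)_*:\Cat{\V}\to\Mod{\V}$ is injective on morphisms into separated $\V$-categories, this forces $g\cdot\yoneda_X=f$.

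For (c), if $g_1,g_2:\tilde{X}\to Y$ both satisfy $g_i\cdot\yoneda_X=f$, then $(g_1)_*\cdot(\yoneda_X)_*=(g_2)_*\cdot(\yoneda_X)_*$, and composing on the right with $\yoneda_X^*$ and using $(\yoneda_X)_*\cdot\yoneda_X^*=\tilde{a}$ yields $(g_1)_*=(g_2)_*$; separatedness of $Y$ then gives $g_1=g_2$. The only real subtlety in the whole argument is establishing the isomorphism $(\yoneda_X)_*\dashv\vdash\yoneda_X^*$ in $\Mod{\V}$ and the fact that $(-)_*$ is injective on maps into separated targets, both of which follow directly from the Yoneda Lemma and the definition of separatedness via the underlying order; these are the items I expect to need the most care to phrase correctly, though no genuinely new computation is required.
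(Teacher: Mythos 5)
Your proposal is correct and follows exactly the route the paper takes in the paragraph preceding the theorem: $\tilde{X}=\overline{\yoneda_X(X)}$ is closed in the Cauchy complete $[X^\op,\V]$ (hence Cauchy complete) and separated, $(\yoneda_X)_*$ and $\yoneda_X^*$ are mutually inverse in $\Mod{\V}$ because $\yoneda_X:X\to\tilde{X}$ is fully faithful and fully dense, and the extension $g$ is the representing $\V$-functor of the left adjoint module $f_*\cdot\yoneda_X^*$, with uniqueness from separatedness of $Y$. You merely make explicit two points the paper leaves implicit (separatedness of $\tilde{X}$ and injectivity of $(-)_*$ on morphisms into separated targets), both of which you justify correctly.
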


The discussion preceding Theorem \ref{thm:CauchyReflective} applies actually to any fully faithful and fully dense $\V$-functor in lieu of the Yoneda embedding, which gives
\begin{proposition}
A $\V$-category $X$ is Cauchy complete if and only if $X$ is injective with respect to fully faithful and fully dense $\V$-functors. 
\end{proposition}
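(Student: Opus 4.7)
My plan is to prove both directions using the module calculus of Subsection~\ref{subsect:Vmodules}, with the Yoneda embedding $\yoneda_X:X\to\tilde{X}$ playing the role of a universal test map for injectivity in the converse direction.

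For the forward implication, I would adapt the construction of the extension $g$ described just before Theorem~\ref{thm:CauchyReflective}, replacing $\yoneda_X$ by an arbitrary fully faithful and fully dense $\V$-functor $j:A\to B$ and $Y$ by $X$ (now only assumed Cauchy complete, not separated). Given $f:A\to X$, the candidate extension $g:B\to X$ should represent the $\V$-module $\varphi:=f_{*}\cdot j^{*}:B\modto X$, whose right adjoint is $\psi:=j_{*}\cdot f^{*}:X\modto B$. The two adjunction inequalities fall out immediately from $j^{*}\cdot j_{*}=a$ (full faithfulness of $j$), $j_{*}\cdot j^{*}=b$ (full density of $j$) and the standard $f_{*}\dashv f^{*}$. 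Cauchy completeness of $X$ then produces $g$ with $g_{*}=\varphi$, and the computation $(g\cdot j)_{*}=g_{*}\cdot j_{*}=f_{*}\cdot j^{*}\cdot j_{*}=f_{*}\cdot a=f_{*}$ yields $g\cdot j\simeq f$, which is the required lifting.

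For the converse I would invoke injectivity only once, at the Yoneda embedding $\yoneda_X:X\to\tilde{X}$, which is fully faithful and fully dense with $\tilde{X}$ Cauchy complete. Extending $1_X$ along $\yoneda_X$ yields a $\V$-functor $r:\tilde{X}\to X$ with $r\cdot\yoneda_X\simeq 1_X$, exhibiting $X$ as a retract (up to $\simeq$) of the Cauchy complete space $\tilde{X}$. It then suffices to observe that such retracts inherit Cauchy completeness: for any left adjoint $\V$-module $\varphi:E\modto X$, the composite $(\yoneda_X)_{*}\cdot\varphi:E\modto\tilde{X}$ is again left adjoint, so by Cauchy completeness of $\tilde{X}$ it equals $\tilde{x}_{*}$ for some $\tilde{x}\in\tilde{X}$; then $x:=r(\tilde{x})\in X$ satisfies $x_{*}=r_{*}\cdot\tilde{x}_{*}=r_{*}\cdot(\yoneda_X)_{*}\cdot\varphi=(r\cdot\yoneda_X)_{*}\cdot\varphi=\varphi$.

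The only subtlety I anticipate is that, without separatedness of $X$, injectivity produces $r\cdot\yoneda_X=1_X$ only up to the equivalence $\simeq$ on underlying $\V$-functors; but $\simeq$-equivalent $\V$-functors induce identical modules under both $(-)_{*}$ and $(-)^{*}$, so this looseness is absorbed harmlessly in the final identification $(r\cdot\yoneda_X)_{*}=(1_X)_{*}$. Beyond that point, everything reduces to routine calculations with $\V$-modules and their composition, and I expect no genuine obstacle.
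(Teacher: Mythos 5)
Your proof is correct and follows essentially the same route the paper intends: the forward direction is exactly the paper's remark that the construction preceding Theorem~\ref{thm:CauchyReflective} works for any fully faithful and fully dense $j$ in place of $\yoneda_X$ (with $j_*$ invertible, so $f_*\cdot j^*$ is left adjoint and hence representable), and the converse is the standard retract argument off $\tilde{X}$. Your handling of the $\simeq$ looseness via $(-)_*$ is also the right observation; no gaps.
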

Here a $\V$-category $X$ is injective with respect to fully faithful and fully dense $\V$-functors whenever, for every fully faithful and fully dense $i:A\to B$ and every $\V$-functor $f:A\to X$, there exists a $\V$-functor $g:B\to X$ with $g\cdot i\simeq f$. The proposition above provides us with an alternative way to prove preservation of Cauchy completeness by functors.
\begin{corollary}\label{cor:PreservCauchyInj}
Let $G:\Cat{\V}\to\Cat{\W}$ be a locally monotone functor with left adjoint $F:\Cat{\W}\to\Cat{\V}$. If $F$ sends fully faithful and fully dense $\W$-functors to fully faithful and fully dense $\V$-functors, then $G$ sends Cauchy complete $\V$-categories to Cauchy complete $\W$-categories.
\end{corollary}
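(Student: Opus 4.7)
The plan is to apply the Proposition immediately preceding this corollary, which equates Cauchy completeness with injectivity with respect to fully faithful and fully dense functors. Fix a Cauchy complete $\V$-category $X$; we wish to show that $GX$ is Cauchy complete, i.e.\ injective with respect to fully faithful and fully dense $\W$-functors. To this end, let $i:A\to B$ be such a $\W$-functor and let $f:A\to GX$ be an arbitrary $\W$-functor; the task is to produce some $\W$-functor $g:B\to GX$ with $g\cdot i\simeq f$.

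I would proceed by passing $f$ across the adjunction $F\dashv G$ to obtain its transpose $\bar{f}:FA\to X$, so that $f=G\bar{f}\cdot\eta_A$, where $\eta$ denotes the unit. By the hypothesis on $F$, the $\V$-functor $Fi:FA\to FB$ is again fully faithful and fully dense. Since $X$ is Cauchy complete, the Proposition supplies a $\V$-functor $\bar{g}:FB\to X$ with $\bar{g}\cdot Fi\simeq\bar{f}$. I then take $g$ to be the $\W$-functor $g:=G\bar{g}\cdot\eta_B:B\to GX$ obtained as the transpose of $\bar{g}$ across the adjunction.

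It remains to check that $g\cdot i\simeq f$, and this is the single step that forces the hypothesis that $G$ be locally monotone: applying $G$ preserves the equivalence $\simeq$, so from $\bar{g}\cdot Fi\simeq\bar{f}$ one obtains $G\bar{g}\cdot GFi\simeq G\bar{f}$, and pre-composing with $\eta_A$ together with the naturality identity $GFi\cdot\eta_A=\eta_B\cdot i$ yields
\[
g\cdot i \;=\; G\bar{g}\cdot\eta_B\cdot i \;=\; G\bar{g}\cdot GFi\cdot\eta_A \;\simeq\; G\bar{f}\cdot\eta_A \;=\; f,
\]
as required. The only delicate point is precisely this transport of an $\simeq$-equivalence of $\V$-functors back across the adjunction; absent local monotonicity of $G$, there would be no reason for the equivalence in $\Cat{\V}$ to descend to an equivalence in $\Cat{\W}$. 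Everything else reduces to formal manipulation of the unit and the adjunction correspondence.
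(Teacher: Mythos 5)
Your proof is correct and follows essentially the same route as the paper: the paper likewise invokes the injectivity characterisation of Cauchy completeness, extends the transpose $\eps_X\cdot Ff$ along the fully faithful and fully dense $Fi$, and transports the resulting equivalence back with $G\bar{g}\cdot\eta_B$ using naturality of $\eta$ and a triangle identity. Your explicit remark that local monotonicity of $G$ is what lets the $\simeq$ descend to $\Cat{\W}$ is a correct and welcome clarification of a point the paper leaves implicit.
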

\begin{proof}
We write $\eta:1\to FG$ and $\eps:FG\to 1$ for the units of the adjunction $F\dashv G$. Let $X$ be a Cauchy complete $\V$-category, $i:A\to B$ be a fully dense embedding in $\Cat{\W}$ and $f:A\to GX$ be a $\V$-functor. By hypothesis, $Fi$ is a fully dense embedding, hence there exists some $\V$-functor $g:FB\to X$ with $g\cdot Fi\simeq\eps_X\cdot Ff$. Then
\[
 Gg\cdot\eta_B\cdot i=Gg\cdot GFi\cdot\eta_A
\simeq G\eps_X\cdot GFf\cdot\eta_A
=G\eps\cdot\eta_{GX}\cdot f=f.\qedhere
\]
\end{proof}

\begin{lemma}\label{lem:PreservFullyDense}
Let $F:\W\to\V$ be a morphism of quantales. Then $F:\Cat{\W}\to\Cat{\V}$ sends fully faithful and fully dense $\W$-functors to fully faithful and fully dense $\V$-functors.
\end{lemma}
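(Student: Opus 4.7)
The plan is to prove fully faithfulness and fully denseness separately, noting that both properties admit clean algebraic formulations which are preserved once we verify that $F:\Mod{\W}\to\Mod{\V}$ preserves composition of modules and identities (i.e.\ the structure modules $a$ for $\W$-categories $(X,a)$).

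For fully faithfulness, the hypothesis says $a(x,y)=b(f(x),f(y))$ for all $x,y\in X$, where $f:(X,a)\to(Y,b)$ is the given $\W$-functor. Applying the map $F:\W\to\V$ pointwise yields $F(a(x,y))=F(b(f(x),f(y)))$, which is precisely the statement that $Ff:(X,F\cdot a)\to(Y,F\cdot b)$ is fully faithful in $\Cat{\V}$. So this part is immediate, with no use of the quantale-morphism structure beyond being a function.

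For fully denseness, the key observation is that $F$ being a (strict) morphism of quantales preserves suprema and tensors, so for $\W$-modules $\varphi:(X,a)\modto(Y,b)$ and $\psi:(Y,b)\modto(Z,c)$ one has
\[
 F(\psi\cdot\varphi)(x,z)=F\!\left(\bigvee_{y\in Y}\varphi(x,y)\otimes\psi(y,z)\right)
 =\bigvee_{y\in Y}F\varphi(x,y)\otimes F\psi(y,z)=(F\psi\cdot F\varphi)(x,z),
\]
and the identity $\W$-module $b$ on $(Y,b)$ is sent to $F\cdot b$, which is the identity $\V$-module on $F(Y,b)$. Thus $F:\Mod{\W}\to\Mod{\V}$ is a functor in a strong sense that preserves composition strictly. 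Combining this with the commutative diagrams \eqref{eq:CommDiagF*}, which give $F(f_*)=(Ff)_*$ and $F(f^*)=(Ff)^*$, from $f_*\cdot f^*=b$ we conclude
\[
 (Ff)_*\cdot(Ff)^* = F(f_*)\cdot F(f^*) = F(f_*\cdot f^*) = F\cdot b,
\]
which exactly says that $Ff$ is fully dense in $\Cat{\V}$.

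The only subtlety, and the main thing to be careful about, is that we genuinely need $F$ to be a strict morphism of quantales (not merely lax): otherwise the preservation of the supremum and tensor in the composition formula would become an inequality, and $F(f_*\cdot f^*)\ge F\cdot b$ would not suffice to recover the equality required for fully denseness. Since the hypothesis of the lemma is precisely that $F$ is a morphism of quantales, this difficulty does not arise, and the proof reduces to the two displays above.
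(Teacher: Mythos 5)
Your proof is correct and follows essentially the same route as the paper, which simply invokes the commutativity of the diagrams \eqref{eq:CommDiagF*} together with the functoriality of $F:\Mod{\W}\to\Mod{\V}$; you merely spell out the details, namely that strict preservation of suprema and tensor gives $F(\psi\cdot\varphi)=F\psi\cdot F\varphi$, so that $f_*\cdot f^*=b$ transfers to $(Ff)_*\cdot(Ff)^*=F\cdot b$ (and fully faithfulness is preserved pointwise). Your closing remark correctly identifies why laxness of $F$ would only yield the inequality $F\psi\cdot F\varphi\le F(\psi\cdot\varphi)$, which goes the wrong way for fully denseness.
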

\begin{proof}
This follows immediately from the commutativity of the diagrams \eqref{eq:CommDiagF*} in Subsection \ref{subsect:Vmodules}.
\end{proof}

\subsection{Cauchy sequences in a $\V$-category}

For a sequence $s=(x_n)_{n\in\N}$ in a $\V$-category $X=(X,a)$, one defines (see \cite{Wag_PhD})
\[
 \Cauchy_X(s)=\bigvee_{N\in\N}\bigwedge_{n,m\geq N}a(x_n,x_m),
\]
which should be seen as a measure of ``Cauchyness'' of $s$. Note that $\Cauchy_X(s)=\Cauchy_{X^\op}(s)$, and $\Cauchy_X(s)=\Cauchy_Y(s)$ for every $\V$-category $Y$ having $X$ as a sub-$\V$-category. More generally, for a $\V$-functor $f:X\to Y$ and a sequence $(x_n)_{n\in\N}$, one has $\Cauchy_X(s)\le\Cauchy_Y(f(s))$ where $f(s)$ denotes the sequence $(f(x_n))_{n\in\N}$ in $Y$, and this inequality is even an equality if $f$ is fully faithful. 

In the sequel we will simply write $\Cauchy(s)$ if it is understood from the context which $\V$-category we consider. Furthermore, one says that $s$ is \emph{Cauchy} in $X$ if $k\le\Cauchy(s)$. By the considerations above, every $\V$-functor sends Cauchy sequences to Cauchy sequences.

\begin{lemma}\label{lem:Subseq_is_Cauchy}
If $s'$ is a subsequence of $s$ in a $\V$-category $X$, then $\Cauchy(s')\ge\Cauchy(s)$. In particular, every subsequence of a Cauchy sequence is Cauchy.
\end{lemma}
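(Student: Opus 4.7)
The plan is to unfold the definitions and exploit the fact that a subsequence's indexing eventually exhausts any tail of $\N$.

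Write $s=(x_n)_{n\in\N}$ and $s'=(x_{n_k})_{k\in\N}$ where $n_0<n_1<n_2<\cdots$ is a strictly increasing sequence in $\N$. The goal is to show that for every $N\in\N$,
\[
 \bigwedge_{n,m\ge N}a(x_n,x_m)\;\le\;\Cauchy(s'),
\]
for then passing to the supremum over $N$ on the left yields $\Cauchy(s)\le\Cauchy(s')$.

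To obtain the displayed inequality, I would use that $n_k\to\infty$ to pick some $K\in\N$ with $n_K\ge N$. Then for all $k,l\ge K$ we have $n_k,n_l\ge n_K\ge N$, so $a(x_{n_k},x_{n_l})$ appears among the terms of the big meet $\bigwedge_{n,m\ge N}a(x_n,x_m)$; hence $a(x_{n_k},x_{n_l})\ge\bigwedge_{n,m\ge N}a(x_n,x_m)$. Taking the meet over $k,l\ge K$ preserves this inequality, and taking the supremum over $K$ then gives $\Cauchy(s')\ge\bigwedge_{n,m\ge N}a(x_n,x_m)$, as required.

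The ``in particular'' clause is immediate: if $k\le\Cauchy(s)$, then $k\le\Cauchy(s)\le\Cauchy(s')$, so $s'$ is Cauchy. There is no real obstacle here — the only subtlety is to be careful that the inner meet is taken over \emph{pairs} $(n,m)$ (resp.\ $(k,l)$) both exceeding the threshold, but this is handled by the choice of $K$ so that $n_K\ge N$ forces both $n_k,n_l\ge N$ for $k,l\ge K$.
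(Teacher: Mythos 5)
Your argument is correct and is essentially the paper's own proof: both reduce to the observation that the meet over pairs of subsequence terms beyond a threshold dominates the meet over all pairs beyond a suitable threshold, together with cofinality of the subsequence indices (the paper phrases this with an infinite subset $M\subseteq\N$ instead of an increasing index map, but the content is identical).
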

\begin{proof}
Let $s=(x_n)_{n\in\N}$ be a sequence in $X=(X,a)$, $M\subseteq\N$ be an infinite subset and $s'=(x_m)_{m\in M}$. Then
\[
 \Cauchy(s')
=\bigvee_{N\in M}\bigwedge_{\substack{n,m\ge N,\\ n,m\in M}}a(x_n,x_m)
\ge\bigvee_{N\in M}\bigwedge_{n,m\ge N}a(x_n,x_m)
=\bigvee_{N\in\N}\bigwedge_{n,m\ge N}a(x_n,x_m).\qedhere
\]
\end{proof}

\begin{lemma}
For any sequence $s=(x_n)_{n\in\N}$ in a $\V$-category $X=(X,a)$ and any $x\in X$:
\[
 \bigvee_{N\in\N}\bigwedge_{n\geq N}a(x_n,x)\geq\Cauchy(s)\otimes \bigwedge_{N\in\N}\bigvee_{n\geq N}a(x_n,x)
\]
\end{lemma}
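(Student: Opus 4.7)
The plan is to use the distributivity of $\otimes$ over suprema to reduce the inequality to a term-by-term estimate, and then to apply the $\V$-category composition law twice.

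First I would rewrite the right-hand side. Since $u\otimes-$ preserves suprema for every $u\in\V$ (and hence so does $-\otimes u$, by commutativity of $\otimes$), writing $B=\bigwedge_{M\in\N}\bigvee_{n\ge M}a(x_n,x)$ and $C_N=\bigwedge_{n,m\ge N}a(x_n,x_m)$, we have $\Cauchy(s)\otimes B=\bigvee_{N\in\N}(C_N\otimes B)$. It therefore suffices to prove, for each fixed $N\in\N$, that $C_N\otimes B\le\bigwedge_{n\ge N}a(x_n,x)$, for then taking the supremum over $N$ on both sides yields the desired inequality.

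Next, fix $N$ and $n\ge N$. For every $k\ge N$, the composition law of the $\V$-category gives $a(x_n,x_k)\otimes a(x_k,x)\le a(x_n,x)$, and from $C_N\le a(x_n,x_k)$ we deduce $C_N\otimes a(x_k,x)\le a(x_n,x)$. Taking the supremum over $k\ge N$ (and again using that $C_N\otimes-$ preserves suprema) yields
\[
C_N\otimes\bigvee_{k\ge N}a(x_k,x)\le a(x_n,x),
\]
and then taking the infimum over $n\ge N$ gives $C_N\otimes\bigvee_{k\ge N}a(x_k,x)\le\bigwedge_{n\ge N}a(x_n,x)$.

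Finally, instantiating the infimum defining $B$ at the single index $M=N$ gives $B\le\bigvee_{k\ge N}a(x_k,x)$, so monotonicity of $\otimes$ yields $C_N\otimes B\le C_N\otimes\bigvee_{k\ge N}a(x_k,x)\le\bigwedge_{n\ge N}a(x_n,x)$, which combined with the reduction in the first paragraph proves the statement. I do not expect a genuine obstacle: the only mildly delicate point is the bookkeeping, since the infimum defining $B$ runs over its own dummy index $M$, and one must choose $M=N$ precisely to match the index used in the two-step triangle-inequality estimate.
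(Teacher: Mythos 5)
Your proof is correct and follows essentially the same route as the paper's: distribute $\otimes$ over the supremum defining $\Cauchy(s)$, instantiate the infimum defining $B$ at the matching index $N$, and conclude via $C_N\le a(x_n,x_k)$ together with the composition law $a(x_n,x_k)\otimes a(x_k,x)\le a(x_n,x)$. The only difference is cosmetic bookkeeping in the order of the quantifier manipulations.
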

\begin{proof}
We calculate:
\begin{align*}
\Cauchy(s)\otimes \bigwedge_{N\in\N}\bigvee_{n\geq N}a(x_n,x)
&=\left(\bigvee_{N\in\N}\bigwedge_{n,m\geq N}a(x_n,x_m)\right)\otimes \left(\bigwedge_{N'\in\N}\bigvee_{k\geq N'}a(x_k,x)\right)\\
&=\bigvee_{N\in\N}\left(\bigwedge_{n,m\geq N}a(x_n,x_m)\otimes \bigwedge_{N'\in\N}\bigvee_{k\geq N'}a(x_k,x)\right)\\
&\le\bigvee_{N\in\N}\left(\bigwedge_{n,m\geq N}a(x_n,x_m)\otimes \bigvee_{k\geq N}a(x_k,x)\right)\\
&=\bigvee_{N\in\N}\bigvee_{k\geq N}\left(\bigwedge_{n,m\geq N}(a(x_n,x_m)\otimes a(x_k,x))\right)\\
&\le\bigvee_{N\in\N}\bigvee_{k\geq N}\left(\bigwedge_{n\geq N}(a(x_n,x_k)\otimes a(x_k,x))\right)\\
&\le\bigvee_{N\in\N}\bigwedge_{n\geq N}a(x_n,x).\qedhere
\end{align*}
\end{proof}

\begin{corollary}
For a Cauchy sequence $(x_{n})_{n\in\N}$ in a $\V$-category $X=(X,a)$ and $x \in X$,
\begin{align*}
\bigvee_{N\in\N}\bigwedge_{n\geq N}a(x_n,x)&=\bigwedge_{N\in\N}\bigvee_{n\geq N}a(x_n,x)
&\text{and}&&
\bigvee_{N\in\N}\bigwedge_{n\geq N}a(x,x_n)&=\bigwedge_{N\in\N}\bigvee_{n\geq N}a(x,x_n).
\end{align*}
\end{corollary}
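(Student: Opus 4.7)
The plan is to prove the two inequalities that constitute the first equality separately, and then obtain the dual identity by a symmetry argument. The easy direction
\[
\bigvee_{N\in\N}\bigwedge_{n\geq N}a(x_n,x)\le\bigwedge_{N\in\N}\bigvee_{n\geq N}a(x_n,x)
\]
holds for any sequence in any complete lattice, and requires no Cauchy hypothesis: given $N,N'\in\N$, setting $M=\max\{N,N'\}$ yields $\bigwedge_{n\ge N}a(x_n,x)\le a(x_M,x)\le\bigvee_{n\ge N'}a(x_n,x)$, after which one takes the supremum over $N$ on the left and the infimum over $N'$ on the right. I would dispatch this in a single line.

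For the reverse inequality, I would invoke the immediately preceding lemma, which states
\[
\bigvee_{N\in\N}\bigwedge_{n\geq N}a(x_n,x)\ge\Cauchy(s)\otimes\bigwedge_{N\in\N}\bigvee_{n\geq N}a(x_n,x).
\]
Since $s$ is Cauchy in $X$, we have $k\le\Cauchy(s)$, so monotonicity of $-\otimes u$ together with neutrality of $k$ gives $\Cauchy(s)\otimes u\ge k\otimes u=u$ for every $u\in\V$. Specialising $u$ to $\bigwedge_{N\in\N}\bigvee_{n\geq N}a(x_n,x)$ produces exactly the missing inequality, completing the first equality.

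For the second identity, I would use the self-duality of the Cauchy measure noted earlier in the subsection, namely $\Cauchy_X(s)=\Cauchy_{X^\op}(s)$. Thus $s$ is also a Cauchy sequence in the dual $\V$-category $X^\op=(X,a^\circ)$, and applying the equality just proved inside $X^\op$ at the same point $x$ yields the second claim after rewriting $a^\circ(x_n,x)=a(x,x_n)$.

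There is no genuine obstacle here: the corollary is essentially a packaging of the preceding lemma together with the trivial ``$\liminf\le\limsup$'' inequality and the observation that passing from $X$ to $X^\op$ leaves the Cauchy predicate invariant. The only point that must be noted carefully is that the lemma is applied with $\Cauchy(s)$ acting on the left of the tensor and that $k$ really is the two-sided neutral element, so that no commutativity of $\otimes$ is needed beyond what is already built into the definition of a quantale.
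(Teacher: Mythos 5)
Your proof is correct and follows exactly the route the paper intends: the corollary is stated without proof precisely because it is the preceding lemma combined with $k\le\Cauchy(s)$ and the trivial inequality $\bigvee_N\bigwedge_{n\ge N}\le\bigwedge_N\bigvee_{n\ge N}$, with the second identity obtained by passing to $X^\op$ via $\Cauchy_X(s)=\Cauchy_{X^\op}(s)$. Nothing to add.
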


To any sequence $s=(x_n)_{n\in\N}$ in a $\V$-category $X=(X,a)$ one can associate $\V$-modules $\varphi_s:1\modto X$ and $\psi_s:X\modto 1$ defined as
\begin{align*}
\varphi_s(x)&=\bigvee_{N\in\N}\bigwedge_{n\geq N}a(x_n,x)
&\text{and}&&
\psi_s(x)&=\bigvee_{N\in\N}\bigwedge_{n\geq N}a(x,x_n),
\end{align*}
for all $x\in X$. In fact, one easily verifies that $\varphi_s$ and $\psi_s$ are $\V$-modules, moreover, one has
\begin{align*}
\psi_s(x)\otimes\varphi_s(y)
&=\left(\bigvee_{N\in\N}\bigwedge_{n\geq N}a(x,x_n)\right)\otimes
\left(\bigvee_{N\in\N}\bigwedge_{n\geq N}a(x_n,y)\right)\\
&=\bigvee_{N\in\N}\left(\bigwedge_{n\geq N}a(x,x_n)\otimes\bigwedge_{m\geq N}a(x_m,y)\right)\\
&\le\bigvee_{N\in\N}a(x,x_N)\otimes a(x_N,y)\le a(x,y)
\intertext{for all $x,y\in X$, and}
\bigvee_{x\in X}\varphi_s(x)\otimes\psi_s(x)
&=\bigvee_{x\in X}\bigvee_{N\in\N}\left(\bigwedge_{m\geq N}a(x_m,x)\otimes\bigwedge_{n\geq N}a(x,x_n)\right)\\
&\ge\bigvee_{N\in\N}\left(\bigwedge_{m\geq N}a(x_m,x_N)\otimes\bigwedge_{n\geq N}a(x_N,x_n)\right)\\
&=\left(\bigvee_{N\in\N}\bigwedge_{m\geq N}a(x_m,x_N)\right)\otimes\left(\bigvee_{N\in\N}\bigwedge_{n\geq N}a(x_N,x_n)\right)\\
&\ge\Cauchy(s)\otimes\Cauchy(s).
\end{align*}
Furthermore, for every $x\in X$,
\begin{align*}
\left(\bigvee_{N\in\N}\bigwedge_{n\ge N}a(x_n,x)\right)\otimes\left(\bigvee_{M\in\N}\bigwedge_{m\ge M}a(x,x_m)\right)
&=\bigvee_{N\in\N}\left(\bigwedge_{n\ge N}a(x_n,x)\otimes\bigwedge_{m\ge N}a(x,x_m)\right)\\
&\le\bigvee_{N\in\N}\bigwedge_{n,m\ge N}a(x_n,x)\otimes a(x,x_m)\\
&\le\bigvee_{N\in\N}\bigwedge_{n,m\ge N}a(x_n,x_m),
\end{align*}
hence $\displaystyle{\bigvee_{x\in X}\varphi_s(x)\otimes\psi_s(x)\le\Cauchy(s)}$. All told,
\begin{proposition}
Let $s$ be a sequence in a $\V$-category $X$. Then $s$ is Cauchy if and only if $\varphi_s\dashv\psi_s$ in $\Mod{\V}$.
\end{proposition}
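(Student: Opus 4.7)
The plan is to unpack the adjunction $\varphi_s \dashv \psi_s$ into its two defining inequalities and then observe that the calculations already performed in the paragraphs immediately preceding the statement are precisely what is needed to establish the equivalence. Since $\varphi_s\colon 1 \modto X$ and $\psi_s\colon X \modto 1$, the adjunction $\varphi_s \dashv \psi_s$ amounts to the counit inequality $\varphi_s \cdot \psi_s \le a$, spelled out as $\psi_s(x) \otimes \varphi_s(y) \le a(x,y)$ for all $x,y \in X$, together with the unit inequality $k \le \psi_s \cdot \varphi_s$, spelled out as $k \le \bigvee_{x \in X} \varphi_s(x) \otimes \psi_s(x)$.

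The counit inequality has already been verified unconditionally for an arbitrary sequence $s$ in the first display preceding the proposition (using only the triangle inequality and associativity/commutativity of $\otimes$). So the content of the statement reduces to deciding exactly when the unit inequality holds, and for this I would invoke the sandwich
\[
\Cauchy(s) \otimes \Cauchy(s) \;\le\; \bigvee_{x\in X} \varphi_s(x) \otimes \psi_s(x) \;\le\; \Cauchy(s),
\]
whose two halves are precisely the second and third displays above the statement.

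From this sandwich both directions follow immediately. If $s$ is Cauchy, i.e.\ $k \le \Cauchy(s)$, then using that $k$ is the neutral element for $\otimes$ I obtain $k = k \otimes k \le \Cauchy(s) \otimes \Cauchy(s) \le \bigvee_{x} \varphi_s(x) \otimes \psi_s(x)$, which together with the (unconditional) counit inequality gives the adjunction. Conversely, assuming $\varphi_s \dashv \psi_s$, the unit yields $k \le \bigvee_{x} \varphi_s(x) \otimes \psi_s(x) \le \Cauchy(s)$, so $s$ is Cauchy.

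There is essentially no obstacle here: all computational work is already done, which is why the proposition is prefaced by ``All told''. The only small point worth flagging is the use of $k \otimes k = k$ in the forward direction (needed because the lower bound in the sandwich is $\Cauchy(s) \otimes \Cauchy(s)$ rather than $\Cauchy(s)$ itself), which is trivially valid in any quantale.
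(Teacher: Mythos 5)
Your proof is correct and follows exactly the paper's own argument: the paper's three displayed calculations before the proposition are precisely your unconditional counit inequality and the two halves of the sandwich $\Cauchy(s)\otimes\Cauchy(s)\le\bigvee_{x}\varphi_s(x)\otimes\psi_s(x)\le\Cauchy(s)$, from which the equivalence follows as you describe (the paper leaves the final deduction, including the $k=k\otimes k$ step, implicit under ``All told'').
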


\begin{lemma}\label{lem:f_preserves_module}
Let $f:X\to Y$ be a $\V$-functor, $s=(x_n)_{n\in\N}$ be a Cauchy sequence in $X$ with associated adjunction $\varphi_s\dashv\psi_s$ in $\Mod{\V}$. Then $\varphi_{f(s)}=f_*\cdot\varphi_s$ and $\psi_{f(s)}=\psi_s\cdot f^*$, where $f(s)$ denotes the sequence $(f(x_n))_{n\in\N}$ in $Y$.
\end{lemma}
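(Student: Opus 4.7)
The plan is to combine the uniqueness of adjoint modules (Lemma~\ref{lem:EqAdjMod}) with a direct comparison of the two candidate pairs. Set $\varphi:=f_*\cdot\varphi_s$ and $\psi:=\psi_s\cdot f^*$, and write $\varphi':=\varphi_{f(s)}$, $\psi':=\psi_{f(s)}$. I will show that both pairs form adjunctions, and then that $\varphi\le\varphi'$ and $\psi\le\psi'$; by Lemma~\ref{lem:EqAdjMod} these inequalities must be equalities.

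First I would observe that $f(s)$ is Cauchy in $Y$. This is immediate from the general fact that $\Cauchy_X(s)\le\Cauchy_Y(f(s))$ for any $\V$-functor $f$, which in turn follows from $a(x,x')\le b(f(x),f(x'))$. Consequently, by the previous proposition, $\varphi'\dashv\psi'$. Next, $\varphi\dashv\psi$ is obtained by composing the adjunctions $f_*\dashv f^*$ (in $\Mod{\V}$) with $\varphi_s\dashv\psi_s$: one has
\[
\psi\cdot\varphi=\psi_s\cdot f^*\cdot f_*\cdot\varphi_s\ge\psi_s\cdot a\cdot\varphi_s=\psi_s\cdot\varphi_s\ge k_E
\qquad\text{and}\qquad
\varphi\cdot\psi=f_*\cdot\varphi_s\cdot\psi_s\cdot f^*\le f_*\cdot a\cdot f^*=f_*\cdot f^*\le b,
\]
so both pairs are indeed adjunctions.

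The second step is to establish the comparisons $\varphi\le\varphi'$ and $\psi\le\psi'$. Here the key observation is the module property $a(x_n,x)\otimes b(f(x),y)\le b(f(x_n),y)$ applied inside the infimum: for each $x\in X$ and $N\in\N$,
\[
\bigwedge_{n\ge N}a(x_n,x)\otimes b(f(x),y)\le\bigwedge_{n\ge N}\bigl(a(x_n,x)\otimes b(f(x),y)\bigr)\le\bigwedge_{n\ge N}b(f(x_n),y),
\]
and taking suprema over $x\in X$ and $N\in\N$ yields $\varphi(y)=f_*\cdot\varphi_s(y)\le\varphi_{f(s)}(y)=\varphi'(y)$. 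The dual computation (using $b(y,f(x))\otimes a(x,x_n)\le b(y,f(x_n))$) gives $\psi\le\psi'$.

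With both adjunctions and both inequalities at hand, Lemma~\ref{lem:EqAdjMod} forces $\varphi=\varphi'$ and $\psi=\psi'$, which are exactly the claimed formulas $\varphi_{f(s)}=f_*\cdot\varphi_s$ and $\psi_{f(s)}=\psi_s\cdot f^*$. I do not anticipate any serious obstacle: the only small subtlety is remembering that $f(s)$ must also be Cauchy before one can speak of the adjunction $\varphi_{f(s)}\dashv\psi_{f(s)}$, after which the proof reduces to manipulation of modules and a single appeal to the uniqueness lemma.
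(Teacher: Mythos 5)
Your proposal is correct and follows essentially the same route as the paper: both reduce the claim via Lemma \ref{lem:EqAdjMod} to the single inequality $f_*\cdot\varphi_s\le\varphi_{f(s)}$ (and its dual), established by the same pointwise computation using $a(x_n,x)\le b(f(x_n),f(x))$ and transitivity. The only difference is that you spell out explicitly that both pairs are adjunctions (Cauchyness of $f(s)$ and composition of adjunctions), which the paper leaves implicit.
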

\begin{proof}
By Lemma \ref{lem:EqAdjMod}, it is enough to show that $\varphi_{f(s)}\ge f_*\cdot\varphi_s$ and $\psi_{f(s)}\ge\psi_s\cdot f^*$. In fact, for any $y\in Y$ (and with $X=(X,a)$ and $Y=(Y,b)$),
\begin{multline*}
 f_*\cdot\varphi_s(y)
=\bigvee_{x\in X}\varphi_s(x)\otimes b(f(x),y)
=\bigvee_{x\in X}\left(\bigvee_{N\in\N}\bigwedge_{n\ge N}a(x_n,x)\right)\otimes b(f(x),y)\\
\le\bigvee_{x\in X}\bigvee_{N\in\N}\bigwedge_{n\ge N}b(f(x_n),f(x))\otimes b(f(x),y)
\le\varphi_{f(s)}(y),
\end{multline*}
and the other inequality follows similarly.
\end{proof}

\begin{lemma}
Let $s$ be a Cauchy sequence in a $\V$-category $X$ and $s'$ be a subsequence of $s$. Then $\varphi_s=\varphi_{s'}$ and $\psi_s=\psi_{s'}$, where $\varphi_s,\varphi_{s'},\psi_s$ and $\psi_{s'}$ denote the associated $\V$-modules.
\end{lemma}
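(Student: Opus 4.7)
The plan is to reduce the equality of modules to one-sided inequalities via Lemma \ref{lem:EqAdjMod}. By Lemma \ref{lem:Subseq_is_Cauchy}, the subsequence $s'$ is itself a Cauchy sequence, so by the Proposition immediately preceding the statement we have two adjunctions $\varphi_s\dashv\psi_s$ and $\varphi_{s'}\dashv\psi_{s'}$ in $\Mod{\V}$. Therefore it suffices to establish $\varphi_s\le\varphi_{s'}$ and $\psi_s\le\psi_{s'}$ (or the reverse pair), after which Lemma \ref{lem:EqAdjMod} delivers both equalities at once.

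To verify the inequality $\varphi_s\le\varphi_{s'}$, write $s=(x_n)_{n\in\N}$ and $s'=(x_{m_k})_{k\in\N}$ with $m_k\to\infty$, and fix $x\in X$. For each $N\in\N$ pick $K$ large enough so that $m_K\ge N$; then $\{m_k\mid k\ge K\}\subseteq\{n\in\N\mid n\ge N\}$, and since the infimum over a subset dominates the infimum over a superset,
\[
\bigwedge_{k\ge K}a(x_{m_k},x)\ge\bigwedge_{n\ge N}a(x_n,x).
\]
Taking the supremum over $K$ (respectively $N$) then yields $\varphi_{s'}(x)\ge\varphi_s(x)$. The argument for $\psi_{s'}(x)\ge\psi_s(x)$ is symmetric, interchanging the roles of the arguments of $a$.

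With both inequalities in hand, Lemma \ref{lem:EqAdjMod} applied to $\varphi_s\dashv\psi_s$ and $\varphi_{s'}\dashv\psi_{s'}$ gives $\varphi_s=\varphi_{s'}$ and $\psi_s=\psi_{s'}$, as desired. I do not expect any real obstacle here: the only subtlety is making sure one has an inequality in the correct direction in order to feed it into Lemma \ref{lem:EqAdjMod}, and the cofinality of $(m_k)$ in $\N$ is exactly what is needed to conclude that the subsequence induces a \emph{larger} pair of modules rather than a different one.
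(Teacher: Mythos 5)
Your proof is correct and follows exactly the paper's argument: reduce to the one-sided inequalities $\varphi_s\le\varphi_{s'}$ and $\psi_s\le\psi_{s'}$ via Lemma \ref{lem:Subseq_is_Cauchy} and Lemma \ref{lem:EqAdjMod}, the paper merely leaving the cofinality computation as ``an easy calculation.'' Nothing to add.
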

\begin{proof}
By Lemma \ref{lem:Subseq_is_Cauchy}, $s'$ is also Cauchy and therefore $\varphi_{s'}\dashv\psi_{s'}$. An easy calculation shows that $\varphi_s\le\varphi_{s'}$ and $\psi_s\le\psi_{s'}$, and the assertion follows from Lemma \ref{lem:EqAdjMod}.
\end{proof}

\begin{proposition}\label{prop:Repr_Implies_Conv}
Let $s$ be a Cauchy sequence in a $\V$-category $X$, and assume that the associated adjunction $\varphi_s\dashv\psi_s$ is of the form $x_*\dashv x^*$, for some $x\in X$. Then $s$ converges to $x$. 
\end{proposition}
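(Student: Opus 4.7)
The plan is to translate convergence into the language of the closure operator introduced earlier in the section. Recall that $s\rw x$ means $x\in\overline{\{x_n\mid n\in M\}}$ for every infinite $M\subseteq\N$, and by the characterisation of the closure (the proposition before Proposition~\ref{prop:PropertiesClosure}), $x\in\overline{A}$ holds if and only if $k\le\bigvee_{y\in A}a(x,y)\otimes a(y,x)$. Hence the task reduces to producing, for every infinite $M\subseteq\N$, the inequality
\[
 k\le\bigvee_{n\in M} a(x,x_n)\otimes a(x_n,x).
\]

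Fix such an $M$ and let $s'=(x_n)_{n\in M}$. By Lemma~\ref{lem:Subseq_is_Cauchy} the subsequence $s'$ is again Cauchy, and by the lemma immediately preceding this proposition, $\varphi_{s'}=\varphi_s=x_*$ and $\psi_{s'}=\psi_s=x^*$. Evaluating at $x$ and using $a(x,x)\ge k$, I obtain $\psi_{s'}(x)\otimes\varphi_{s'}(x)\ge k\otimes k=k$.

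The remaining step is to bound $\psi_{s'}(x)\otimes\varphi_{s'}(x)$ from above by $\bigvee_{n\in M}a(x,x_n)\otimes a(x_n,x)$. Expanding the definitions and pulling the two outer suprema out (tensor preserves suprema in each variable), the product equals a double supremum over $N,N'\in\N$ of
\[
 \bigl(\textstyle\bigwedge_{n\ge N,\,n\in M} a(x,x_n)\bigr)\otimes\bigl(\textstyle\bigwedge_{m\ge N',\,m\in M} a(x_m,x)\bigr);
\]
since both tails are monotone in their indices, I can merge the two suprema into a single supremum over $N\in\N$ by replacing $N,N'$ by $\max(N,N')$. For any such $N$ and any $n\in M$ with $n\ge N$, the inner factor is dominated by $a(x,x_n)\otimes a(x_n,x)$, so taking the sup first over such $n$ and then over $N$ yields the required inequality.

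The proof is essentially a bookkeeping exercise with $\bigvee$ and $\bigwedge$; the only point that requires a modicum of care is the cofinality argument collapsing the two independent suprema over $N$ into one. The conceptual content---passing from the sequence to an arbitrary subsequence so as to test the closure condition with indices confined to $M$---is entirely carried by the invariance of $\varphi$ and $\psi$ under subsequences together with the representability hypothesis $\varphi_s=x_*$, $\psi_s=x^*$.
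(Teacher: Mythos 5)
Your proof is correct. It differs from the paper's in how the closure condition is verified. The paper also reduces to subsequences via the invariance of $\varphi_s,\psi_s$, but then stays entirely at the level of modules: writing $i:A\hookrightarrow X$ for the inclusion of $A=\{x_n\mid n\in\N\}$, it observes that $i^*\cdot x_*$ and $x^*\cdot i_*$ are precisely the modules induced by $s$ viewed as a sequence in $A$, hence adjoint, which is the \emph{definition} of $x\in\overline{A}$ -- no suprema or infima are ever unfolded. You instead invoke the elementwise characterisation $x\in\overline{M}\iff k\le\bigvee_{y\in M}a(x,y)\otimes a(y,x)$ and establish the inequality by sandwiching $\psi_{s'}(x)\otimes\varphi_{s'}(x)$ between $k$ (from representability and $a(x,x)\ge k$) and $\bigvee_{n\in M}a(x,x_n)\otimes a(x_n,x)$ (from the cofinality/merging computation, which is sound since both tails are monotone in $N$ and tensor preserves suprema). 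Your route is more elementary and self-contained -- it does not require recognising the restricted modules $i^*\cdot x_*$, $x^*\cdot i_*$ as the modules of $s$ in $A$ -- at the cost of an explicit lattice computation; the paper's argument is shorter and makes the conceptual point that closure membership is itself an adjointness statement. One stylistic remark: since you handle an arbitrary infinite $M$ directly, you do not actually need the paper's explicit reduction to $M=\N$, though the subsequence lemma is doing the same work in both proofs.
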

\begin{proof}
We have to show that $x\in\overline{\{x_n\mid n\in M\}}$, for every $M\subseteq\N$ infinite. Since every subsequence of $s$ induces the same adjunction $\varphi_s\dashv\psi_s$, it is enough to consider $M=\N$. Let $A=\{x_n\mid n\in\N\}$ and $i:A\hrw X$ be the inclusion $\V$-functor. Then $i^*\cdot x_*:1\modto A$ and $x^*\cdot i_*:A\modto 1$ are the $\V$-modules induced by the Cauchy sequence $s$ in $M$, hence $i^*\cdot x_*\dashv x^*\cdot i_*$, that is, $x\in\overline{A}$.
\end{proof}

Hence, representability of the corresponding adjunction $\varphi_s\dashv\psi_s$ guarantees convergence of a Cauchy sequence $s$. We investigate now under which conditions the reverse statement is true.

\begin{proposition}
Let $X$ be a $\V$-category and $s=(x_n)_{n\in\N}$ be a Cauchy sequence in $X$. Put $\widetilde{s}:=(x_n^*)_{n\in\N}$. Then $\widetilde{s}\rw\psi_s$ in $[X^\op,\V]$. If, moreover, $k=\top$ is the top-element in $\V$, then $\widetilde{s}\rw\psi$ implies $\psi=\psi_s$, for every $\V$-module $\psi\in[X^\op,\V]$.
\end{proposition}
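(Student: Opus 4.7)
My plan is to treat the two implications separately. For the first, $\widetilde{s}\rw\psi_s$, I would translate the convergence into the closure condition, which via the closure characterisation for $\V$-categories amounts to verifying
\[
 k\le\bigvee_{n\in M}[\psi_s,x_n^*]\otimes[x_n^*,\psi_s]
\]
for every infinite $M\subseteq\N$. The brackets simplify: by the Yoneda Lemma $[x_n^*,\psi_s]=\psi_s(x_n)$, while the adjunction $\varphi_s\dashv\psi_s$ together with the identity $[\psi,\psi']=\psi'\cdot\varphi$ gives $[\psi_s,x_n^*]=x_n^*\cdot\varphi_s=\varphi_s(x_n)$, the last step being the module identity $a\cdot\varphi_s=\varphi_s$ read at $x_n$. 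So I only need to bound $\bigvee_{n\in M}\varphi_s(x_n)\otimes\psi_s(x_n)$ from below by $k$. Setting $e_N:=\bigwedge_{n,m\ge N}a(x_n,x_m)$, one has $\varphi_s(x_n)\ge e_N$ and $\psi_s(x_n)\ge e_N$ for any $n\ge N$, so $\varphi_s(x_n)\otimes\psi_s(x_n)\ge e_N\otimes e_N$. Since $M$ is infinite and $(e_N)$ is monotone in $N$, taking suprema collapses the double supremum to the diagonal and yields
\[
 \bigvee_{n\in M}\varphi_s(x_n)\otimes\psi_s(x_n)\ge\bigvee_N e_N\otimes e_N=\Cauchy(s)\otimes\Cauchy(s)\ge k.
\]

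For the second statement, suppose $k=\top$ and $\widetilde{s}\rw\psi$. The closure condition now reads $\top\le\bigvee_{n\in M}[\psi,x_n^*]\otimes\psi(x_n)$ for every infinite $M$. The pivotal observation is that $k=\top$ forces $u\otimes v\le u\wedge v$, decoupling the tensor and giving both $\bigvee_{n\in M}[\psi,x_n^*]=\top$ and $\bigvee_{n\in M}\psi(x_n)=\top$. I would then prove $\psi=\psi_s$ by establishing the two pointwise inequalities. For $\psi_s(x)\le\psi(x)$, the left $a$-action $a(x,x_m)\otimes\psi(x_m)\le\psi(x)$ combined with $\bigvee_{m\ge N}\psi(x_m)=\top$ yields $\psi(x)\ge\bigwedge_{m\ge N}a(x,x_m)$ for every $N$, and the supremum over $N$ finishes. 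For $\psi(x)\le\psi_s(x)$, the estimate $[\psi,x_n^*]=\bigwedge_y\hom(\psi(y),a(y,x_n))\le\hom(\psi(x),a(x,x_n))$ gives $\psi(x)\otimes[\psi,x_n^*]\le a(x,x_n)$, and taking the supremum over $n\in M$ produces $\psi(x)\le\bigvee_{n\in M}a(x,x_n)$ for every infinite $M$, hence $\psi(x)\le\bigwedge_N\bigvee_{n\ge N}a(x,x_n)$. Invoking the Corollary on Cauchy sequences identifies the latter with $\bigvee_N\bigwedge_{n\ge N}a(x,x_n)=\psi_s(x)$, closing the argument.

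The harder step is the upper bound $\psi\le\psi_s$ in Part 2: two distinct ingredients have to combine, namely the decoupling provided by $k=\top$ and the equality $\inf\sup=\sup\inf$ supplied by the Cauchy Corollary. Without $k=\top$ one loses the ability to read individual information about $[\psi,x_n^*]$ off the closure condition, and without the Cauchy hypothesis on $s$ the upper bound $\bigwedge_N\bigvee_{n\ge N}a(x,x_n)$ would in general properly exceed $\psi_s(x)$, leaving $\psi=\psi_s$ unforced.
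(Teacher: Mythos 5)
Your argument is correct, but it takes a genuinely different route from the paper's. For the first assertion the paper does not touch the closure formula at all: it observes that $\psi_s^*=\psi_s\cdot\yoneda_X^*$ (Lemma \ref{lem:PsiRepresentPsi}), that $\psi_s\cdot\yoneda_X^*=\psi_{\widetilde{s}}$ (Lemma \ref{lem:f_preserves_module} applied to $\yoneda_X$), and then invokes Proposition \ref{prop:Repr_Implies_Conv}: the adjunction associated to $\widetilde{s}$ is represented by the point $\psi_s$, hence $\widetilde{s}\rw\psi_s$. Your direct verification of $k\le\bigvee_{n\in M}\varphi_s(x_n)\otimes\psi_s(x_n)$ via $e_N=\bigwedge_{n,m\ge N}a(x_n,x_m)$ is a correct unwinding of that chain and is essentially the computation hidden inside those three lemmas; it buys self-containedness at the cost of redoing work the paper has already packaged. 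For the second assertion the difference is more substantial. The paper first notes that $\widetilde{s}\rw\psi$ forces $\psi\in\overline{\yoneda_X(X)}$, so $\psi$ has a left adjoint $\varphi$; it then proves only the two \emph{lower} bounds $\psi\ge\psi_s$ and $\varphi\ge\varphi_s$ (via $\psi(x)\ge\hom(\bigvee_{n\ge N}\varphi(x_n)\otimes\psi(x_n),\psi(x))\ge\bigwedge_{n\ge N}a(x,x_n)$, where $k=\top$ is used to discard the factor $\varphi(x_n)$) and concludes by uniqueness of adjoints, Lemma \ref{lem:EqAdjMod}. You instead prove both pointwise inequalities for $\psi$ alone: the lower bound by the same left-action argument, and the upper bound by decoupling the tensor ($u\otimes v\le u\wedge v$ when $k=\top$), extracting $\bigvee_{n\in M}[\psi,x_n^*]=\top$, and then identifying $\bigwedge_N\bigvee_{n\ge N}a(x,x_n)$ with $\psi_s(x)$ via the Corollary on Cauchy sequences. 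Your version never needs the left adjoint of $\psi$ nor Lemma \ref{lem:EqAdjMod}, trading them for the $\sup\inf=\inf\sup$ identity; both uses of $k=\top$ are genuine and correctly located. A minor presentational point: in part one you should note that $[\psi_s,x_n^*]=\varphi_s(x_n)$ is only available because $s$ is assumed Cauchy, so that $\psi_s$ does have the left adjoint $\varphi_s$ -- you use this implicitly and it is indeed a hypothesis, so there is no gap.
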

\begin{proof}
By Lemma \ref{lem:PsiRepresentPsi}, $\psi_s^*=\psi_s\cdot\yoneda_X^*$, and $\psi_s\cdot\yoneda_X^*=\psi_{\widetilde{s}}$ by Lemma \ref{lem:f_preserves_module}. Hence, by Proposition \ref{prop:Repr_Implies_Conv}, $\widetilde{s}\rw\psi_s$. For the second statement, assume now that $k=\top$ is the top-element in $\V$. First note that from $\widetilde{s}\rw\psi$ it follows that $\psi\in\overline{\yoneda_X(X)}$, hence the $\V$-module $\psi:X\modto 1$ has a left adjoint $\varphi:1\modto X$. Furthermore, for any infinite subset $M\subseteq\N$,
\[
 k\le\bigvee_{m\in M}[\psi,x_m^*]\otimes[x_m^*,\psi]
=\bigvee_{m\in M}\varphi(x_m)\otimes\psi(x_m).
\]
Hence, for any $N\in\N$,
\[
\psi(x)
\ge\hom(\bigvee_{n\ge N}\varphi(x_n)\otimes\psi(x_n),\psi(x))
=\bigwedge_{n\ge N}\hom(\varphi(x_n)\otimes\psi(x_n),\psi(x))
\ge\bigwedge_{n\ge N}a(x,x_n),
\]
where the last inequality follows from
\[
a(x,x_n)\otimes\psi(x_n)\otimes\varphi(x_n)\le\psi(x)\otimes\varphi(x_n)\le\psi(x).
\]
Therefore $\psi\ge\psi_s$, and similarly one obtains $\varphi\ge\varphi_s$. Lemma \ref{lem:EqAdjMod} guarantees now $\psi=\psi_s$.
\end{proof}
\begin{corollary}
Assume that $k=\top$ is the top-element of $\V$. Let $s$ be a Cauchy sequence in a $\V$-category $X$ and $x\in X$ with $s\rw x$. Then $x^*=\psi_s$.
\end{corollary}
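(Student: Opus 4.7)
The plan is to apply the preceding proposition to $\widetilde{s}$, using the Yoneda embedding to transport the hypothesis $s\rw x$ in $X$ into a convergence statement for $\widetilde{s}$ in $[X^\op,\V]$. Recall that by Proposition \ref{prop:PropertiesClosure}, any $\V$-functor $f:X\to Y$ satisfies $f(\overline{M})\subseteq\overline{f(M)}$, and from this one deduces at once that $\V$-functors preserve convergence of sequences: if $s\rw x$ in $X$, then for every infinite $M\subseteq\N$ one has $x\in\overline{\{x_n\mid n\in M\}}$, hence $f(x)\in\overline{\{f(x_n)\mid n\in M\}}$, which gives $f(s)\rw f(x)$ in $Y$.

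Applying this observation to the Yoneda embedding $\yoneda_X:X\to[X^\op,\V]$, which sends each $x_n$ to $x_n^*$ and $x$ to $x^*$, the hypothesis $s\rw x$ yields $\widetilde{s}=(x_n^*)_{n\in\N}\rw x^*$ in $[X^\op,\V]$. Since we are assuming $k=\top$, the second half of the preceding proposition applies and forces $x^*=\psi_s$, which is exactly the desired conclusion.

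No step presents any real obstacle; the only thing to check carefully is that $\V$-functors preserve sequence convergence, which is immediate from item~(iv) of Proposition \ref{prop:PropertiesClosure} together with the definition of convergence via closure on all infinite tails. Everything else is a direct invocation of the preceding proposition.
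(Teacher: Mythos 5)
Your proof is correct and follows the same route as the paper: transport $s\rw x$ along the Yoneda embedding to get $\widetilde{s}\rw x^*$ in $[X^\op,\V]$, then invoke the preceding proposition. The paper simply asserts the transported convergence without justification, whereas you supply the (correct) reason via $f(\overline{M})\subseteq\overline{f(M)}$; note only that convergence is defined via all infinite subsets $M\subseteq\N$, not just tails, but your argument handles exactly that.
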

\begin{proof}
From $s\rw x$ it follows that $\widetilde{s}\rw x^*$ in $[X^\op,\V]$, where $\widetilde{s}:=(x_n^*)_{n\in\N}$ and $s=(x_n)_{n\in\N}$, and therefore $x^*=\psi_s$ by the proposition above.
\end{proof}

Hence, under the assumption that $k=\top$ in $\V$, for a Cauchy sequence $s=(x_n)_{n\in\N}$ in a $\V$-category $X$ and $x\in X$ one has:
\begin{align*}
s\rw x &\iff x_*=\varphi_s\;\iff x^*=\psi_s\\
&\iff x_*\le\varphi_s\;\und\; x^*\le\psi_s\\
&\iff \forall y\in X\,.\,\left(\left(a(x,y)\le\bigvee_{N\in\N}\bigwedge_{n\ge N}a(x_n,y)\right)\;\und\; \left(a(y,x)\le\bigvee_{N\in\N}\bigwedge_{n\ge N}a(y,x_n)\right)\right)\\
&\iff \left(k\le\bigvee_{N\in\N}\bigwedge_{n\ge N}a(x_n,x)\right)\;\und\; \left(k\le\bigvee_{N\in\N}\bigwedge_{n\ge N}a(x,x_n)\right)\\
\intertext{(if, moreover, $k=\bigvee\{u\in\V\mid u\ll k\}$)}
&\iff \forall u\ll k\,\exists N\in\N\,\forall n\ge N\,.\,(u\le a(x_n,x)\,\und\,u\le a(x,x_n)).
\end{align*}

\begin{theorem}\label{thm:ModuleVsSequence}
Assume that $k=\top$ in $\V$ and that there is a sequence $(u_n)_{n\in\N}$ in $\V$ satisfying
\begin{enumerate}
\item $\displaystyle{\bigvee_{n\in\N}u_n= k}$,
\item for all $n\in\N$, $u_n\ll k$,
\item for all $n\in\N$, $u_n\le u_{n+1}$.
\end{enumerate}
Then every adjunction $(\varphi:1\modto X)\dashv(\psi:X\modto 1)$ in $\Mod{\V}$ is of the form $\varphi_s\dashv\psi_s$, for some Cauchy sequence $s$ in $X$. Hence, under these conditions, a $\V$-category $X$ is Cauchy complete if and only if every Cauchy sequence converges.
\end{theorem}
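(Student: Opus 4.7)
The plan is: given an adjunction $(\varphi : 1 \modto X) \dashv (\psi : X \modto 1)$, construct a Cauchy sequence $s = (x_n)_{n \in \N}$ in $X$ with $\varphi_s = \varphi$ and $\psi_s = \psi$ by using the sequence $(u_n)$ to pick points from the ``support'' of the adjunction. From the counit inequality $k \le \bigvee_{x \in X} \varphi(x) \otimes \psi(x)$ and the totally-below property (ii), I would choose, for each $n$, some $x_n \in X$ with $u_n \le \varphi(x_n) \otimes \psi(x_n)$. The assumption $k = \top$ gives the pointwise estimate $u \otimes v \le u \wedge v$ in $\V$, so $u_n \le \varphi(x_n)$ and $u_n \le \psi(x_n)$ separately.

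To verify that $s$ is Cauchy, I would exploit the other adjunction inequality $\psi(x) \otimes \varphi(y) \le a(x,y)$: for all $n, m \ge N$, hypothesis (iii) gives
\[
a(x_n, x_m) \ge \psi(x_n) \otimes \varphi(x_m) \ge u_n \otimes u_m \ge u_N \otimes u_N,
\]
and then, comparing $u_N \otimes u_M$ with $u_{\max(N,M)} \otimes u_{\max(N,M)}$,
\[
\bigvee_{N \in \N} u_N \otimes u_N \;=\; \Big(\bigvee_{N \in \N} u_N\Big) \otimes \Big(\bigvee_{M \in \N} u_M\Big) \;=\; k,
\]
so $\Cauchy(s) \ge k$. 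The same scheme identifies the induced modules with the given ones: for each $y \in X$ and $n \ge N$, $a(x_n, y) \ge \psi(x_n) \otimes \varphi(y) \ge u_N \otimes \varphi(y)$, hence $\varphi_s(y) \ge \big(\bigvee_N u_N\big) \otimes \varphi(y) = \varphi(y)$, and symmetrically $\psi_s \ge \psi$. Since both $\varphi_s \dashv \psi_s$ and $\varphi \dashv \psi$ hold, Lemma \ref{lem:EqAdjMod} forces $\varphi = \varphi_s$ and $\psi = \psi_s$.

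For the final equivalence, the forward direction is immediate: if $X$ is Cauchy complete and $s$ is Cauchy, then $\varphi_s \dashv \psi_s$ is representable and Proposition \ref{prop:Repr_Implies_Conv} yields convergence. For the converse, given any adjunction $\varphi \dashv \psi$, the construction above produces a Cauchy $s$ with $\psi = \psi_s$; the assumed convergence $s \to x$ together with the corollary preceding the theorem (which uses $k = \top$) gives $x^* = \psi_s = \psi$, and unicity of adjoints yields $x_* = \varphi$, so the adjunction is representable.

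The main obstacle is the selection step, where all three properties of $(u_n)$ together with $k = \top$ must act in concert: the totally-below property supplies the points $x_n$; the hypothesis $k = \top$ splits the joint bound $u_n \le \varphi(x_n) \otimes \psi(x_n)$ into separate inequalities $u_n \le \varphi(x_n)$ and $u_n \le \psi(x_n)$; monotonicity of $(u_n)$ uniformises the estimate over tails; and $\bigvee u_n = k$ lifts the resulting bound back up to $k$ in both the Cauchy estimate and the comparison with $\varphi$ and $\psi$. Dropping any single one of these hypotheses would break the argument.
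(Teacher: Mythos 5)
Your proposal is correct and follows essentially the same route as the paper: pick $x_n$ with $u_n\le\varphi(x_n)\otimes\psi(x_n)$ via the counit inequality and $u_n\ll k$, verify Cauchyness from $\psi(x_n)\otimes\varphi(x_m)\le a(x_n,x_m)$, and conclude via Lemma \ref{lem:EqAdjMod}. The only (immaterial) difference is that you establish $\varphi\le\varphi_s$ directly from the counit, whereas the paper establishes $\varphi_s\le\varphi$ from the module property of $\varphi$; either inequality suffices for Lemma \ref{lem:EqAdjMod}.
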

\begin{proof}
We set up a sequence $s=(x_n)_{n\in\N}$ in $X$ putting, for each $n \in \N$, $x_n$ so that $u_n\le \varphi(x_n) \otimes \psi(x_n)$. Then $(x_n)_{n\in\N}$ is Cauchy since
\[
\bigvee_{N\in\N}\bigwedge_{n,m\ge N}a(x_n,x_m)
\ge\bigvee_{N\in\N}\bigwedge_{n,m\ge N}\psi(x_n)\otimes\varphi(x_m)
\ge\bigvee_{N\in\N}\bigwedge_{n,m\ge N}u_n\otimes u_m
\ge\bigvee_{N\in\N}u_N\otimes u_N\ge k.
\]
Furthermore, for every $x\in X$ and $N\in\N$,
\[
\bigwedge_{n\ge N}a(x_n,x)
\le\bigwedge_{n\ge N}\hom(\varphi(x_n),\varphi(x))
\le\hom(\bigvee_{n\ge N}u_n,\varphi(x))=\varphi(x),
\]
hence $\varphi_s\le\varphi$. Similarly, $\psi_s\le\psi$, and Lemma \ref{lem:EqAdjMod} implies $\varphi_s=\varphi$ and $\psi_s=\psi$.
\end{proof}

\begin{corollary}\label{cor:CCsym}
Under the assumptions of Theorem \ref{thm:ModuleVsSequence}, if $X$ is a symmetric $\V$-category and $(\varphi:1\modto X)\dashv(\psi:X\modto 1)$, then $\varphi(x)=\psi(x)$ for all $x\in X$. Consequently, for a $\V$-subcategory $M$ of a $\V$-category $Y$, if $M$ is symmetric, then $\overline{M}$ is symmetric too. In particular, 
the Cauchy-completion of a symmetric $\V$-category is again symmetric.
\end{corollary}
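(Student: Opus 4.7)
My approach splits into the three successive assertions. For the first, I would invoke Theorem~\ref{thm:ModuleVsSequence} to replace the abstract adjunction $\varphi\dashv\psi$ by the concrete pair $\varphi_s\dashv\psi_s$ arising from a Cauchy sequence $s=(x_n)_{n\in\N}$ in $X$. The defining formulas
$\varphi_s(x)=\bigvee_{N\in\N}\bigwedge_{n\ge N}a(x_n,x)$ and
$\psi_s(x)=\bigvee_{N\in\N}\bigwedge_{n\ge N}a(x,x_n)$
differ only in the order of the two arguments of $a$, so symmetry of $X$ collapses them term by term and the claim is immediate.

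For the second assertion, I would fix $y,y'\in\overline{M}$ and denote by $i:M\hookrightarrow Y=(Y,a)$ the inclusion. The condition $y\in\overline{M}$ unfolds to an adjunction $i^*\cdot y_*\dashv y^*\cdot i_*$ between $1$ and $M$ in $\Mod{\V}$, whose components at $z\in M$ are $a(y,z)$ and $a(z,y)$ respectively. Applying the first assertion to the symmetric $\V$-category $M$ yields $a(y,z)=a(z,y)$ for all $z\in M$, and the same argument applied to $y'$ gives $a(y',z)=a(z,y')$. I would then invoke the closure identity
\[
a(y,y')=\bigvee_{z\in M}a(y,z)\otimes a(z,y')
\]
recorded right after the characterisation of $\overline{M}$ in the preceding subsection, whose right-hand side rearranges via commutativity of $\otimes$ into $\bigvee_{z\in M}a(y',z)\otimes a(z,y)=a(y',y)$.

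The final assertion is an application of the second: realising the Cauchy completion as $\widetilde X=\overline{\yoneda_X(X)}$ inside $[X^\op,\V]$ and using full faithfulness of $\yoneda_X$ to transport symmetry of $X$ onto $\yoneda_X(X)$ as a sub-$\V$-category of $\widetilde X$, the previous step applied with $M=\yoneda_X(X)$ and $Y=\widetilde X$ delivers symmetry of $\widetilde X$. I do not foresee any genuine obstacle, since Theorem~\ref{thm:ModuleVsSequence} and the closure formula from the preceding subsection carry all the weight. The one subtle point worth flagging is in the middle step: symmetry at pairs $(y,z)$, $(y',z)$ with $z\in M$ does not directly yield symmetry at $(y,y')$, and bridging this gap is precisely the role of the closure formula combined with commutativity of $\otimes$.
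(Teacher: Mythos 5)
Your proposal is correct and follows essentially the same route as the paper: Theorem~\ref{thm:ModuleVsSequence} reduces the adjunction to $\varphi_s\dashv\psi_s$ so that symmetry of $a$ identifies the two formulas, the first assertion applied to the adjunction $i^*\cdot y_*\dashv y^*\cdot i_*$ gives symmetry between points of $\overline{M}$ and points of $M$, and the identity $a(y,y')=\bigvee_{z\in M}a(y,z)\otimes a(z,y')$ together with commutativity of $\otimes$ closes the gap for two points of $\overline{M}$. The subtle point you flag at the end is exactly the step the paper's proof handles with that closure formula, so there is nothing to add.
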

\begin{proof}
If $X=(X,a)$ is symmetric and $\varphi\dashv\psi$, then
\[
 \varphi(x)=\bigvee_{N\in\N}\bigwedge_{n\ge N}a(x_n,x)
=\bigvee_{N\in\N}\bigwedge_{n\ge N}a(x,x_n)=\psi(x),
\]
for all $x\in X$. Hence, if $y\in\overline{M}$ and $M\subseteq Y$ is a symmetric sub-$\V$-category of $Y=(Y,b)$, then $b(y,x)=b(x,y)$ for all $x\in M$. If also $y'\in\overline{M}$, then
\[
 b(y,y')=\bigvee_{x\in M}b(y,x)\otimes b(x,y')
=\bigvee_{x\in M}b(y',x)\otimes b(x,y)=b(y',y).\qedhere
\]
\end{proof}

% \begin{remark}
% Theorem \ref{thm:ModuleVsSequence} makes the non-trivial assumption that $k$ can be approximated by a sequence of elements totally below $k$. This is certainly the case in $\V=[0,\infty]$ since $\displaystyle{0=\inf_{n\in\N}\frac{1}{n}}$, but also for $\V=\Delta$ (see Subsection \ref{subsect:ExProbMetConv} below). In particular, it implies that the set $A=\{x\in\V\mid x\ll k\}$ is directed, which is one of  the conditions of a \emph{value quantale} as considered in \cite{Flagg_ComplCont}. In fact, it is shown there that, under these assumptions, Cauchy completeness can be equivalently described via $\V$-modules and via Cauchy nets. Using the conceptual power of adjunction, we were able to show that many results linking $\V$-modules and Cauchy sequences (resp.\ nets) are valid under much milder assumptions.
% \end{remark}
\begin{remark}
We also note that, assuming that $A=\{x\in\V\mid x\ll k\}$ is directed and $k=\bigvee A$, then $\V$ satifies the condition
\[
 k\le u\vee w\,\Rw\,k\le u\text{ or }k\le v,
\]
for all $u,v\in\V$, as well as $\displaystyle{k=\bigvee_{x\ll k}x\otimes x}$ (for the latter, see \cite[Theorem 1.12]{Flagg_ComplCont}). For the former, note that $A=A_u\cup A_v$ where $A_u=\{x\in A\mid x\le u\}$ and $A_v=\{x\in A\mid x\le v\}$, and also that directedness of $A$ implies that $k=\bigvee\{x\in A\mid x\ge y\}$, for any $y\in A$. Hence, if $k=\bigvee A_u$, then $k\le u$, otherwise there is some $y\in A$ with $y\notin A_u$. Therefore $\{x\in A\mid x\ge y\}\subseteq A_v$, and we conclude $k\le\bigvee A_v\le v$. Consequently, if $A$ is directed, then the closure $\overline{(-)}$ is topological (see Proposition \ref{prop:PropertiesClosure}). Under the conditions of Theorem \ref{thm:ModuleVsSequence}, this topology is determined by its convergent (Cauchy) sequences in the sense that
\begin{align*}
x\in\overline{M} &\iff \text{there is some sequence $s$ in $M$ with $s\rw x$}\\
&\iff\forall u\ll k\,\exists y\in M\,.\,u\le a(x,y)\,\und\, u\le a(y,x)\\
&\iff\forall u\ll k\,\exists y\in M\,.\,u\ll a(x,y)\,\und\, u\ll a(y,x)\\
&\iff M\cap B(x,u)\cap B(u,x)\neq\varnothing,
\end{align*}
where $B(x,u)=\{y\in X\mid u\ll a(x,y)\}$ and $B(u,x)=\{y\in X\mid u\ll a(y,x)\}$. In fact, the collection of all sets $B_u(x)=B(x,u)\cap B(u,x)$ ($x\in X$, $u\ll k$) is a basis for the topology on $X$. Hence, under these assumptions, the topology considered here coincides with the one in \cite{Flagg_ComplCont}.
\end{remark}

\subsection{Example: probabilistic metric spaces}\label{subsect:ExProbMetConv}

We show now that the notions (and results) of Cauchy completeness for a generic $\V$-category specialise to established concepts for probabilistic metric spaces. Recall from Subsection \ref{subsect:Delta} that the quantale $\Delta$ is completely distributive, the neutral element $\eps$ is the top element of $\Delta$ and one has
\begin{align*}
f_{\delta,u}\ll\eps\quad\text{for all $\delta>0,u<1$}, &&
\eps=\bigvee\{f_{\delta,u}\mid \delta>0,u<1\}=\bigvee_{n\in\N} f_{\frac{1}{n},1-\frac{1}{n}},
\end{align*}
hence $\Delta$ satisfies the conditions of Theorem \ref{thm:ModuleVsSequence}. By definition, a sequence $s=(x_n)_{n\in\N}$ in a probabilistic metric space $X=(X,a)$ is Cauchy if and only if
\[
\forall\delta>0\,\forall u<1\,\exists N\in\N\,\forall n,m\ge N\,\forall t>\delta
\,.\,a(x_n,x_m)(t)\ge u,
\]
which is equivalent to
\[
\forall\delta>0\,\forall u<1\,\exists N\in\N\,\forall n,m\ge N\,.\,a(x_n,x_m)(\delta)\ge u.
\]
Similarly, $s$ converges to $x\in X$ precisely when
\[
\forall\delta>0\,\forall u<1\,\exists N\in\N\,\forall n\ge N\,.\,a(x_n,x)(\delta)\ge u\,\und\,a(x,x_n)(\delta)\ge u,
\]
equivalently, whenever
\begin{align*}
\lim_{n\to\infty}a(x_n,x)(\delta)&\rw 1 &\text{and}&&
\lim_{n\to\infty}a(x,x_n)(\delta)&\rw 1
\end{align*}
for all $\delta>0$. The induced topology on $X$ has the sets
\[
 B_{\delta,u}(x)=\{y\in X\mid a(x,y)(\delta)>u\,\und\,a(y,x)(\delta)>u\}
\]
has basic open sets. 

As for metric spaces, a probabilistic metric space $X$ is Cauchy complete (in the sense that every adjoint pair of $\Delta$-modules is representable) if and only if every Cauchy sequence converges, and a Cauchy completion of $X$ is given by $\yoneda_X:X\to\widetilde{X}$, where $\widetilde{X}$ is the subspace of $\Delta^{X^\op}$ defined by all right adjoint $\Delta$-modules. Furthermore, by Proposition \ref{prop:PreservCauchyViaPhi} and Corollary \ref{cor:XtoFXCauchy}, a metric space $X$ is Cauchy complete in $\MET$ if and only if $I_\infty X$ is Cauchy complete in $\PROBMET$, and Corollary \ref{cor:PreservCauchyInj} and Lemma \ref{lem:PreservFullyDense} imply that $P_\infty:\PROBMET\to\MET$ preserves Cauchy completeness.

Finally, we remark that the Cauchy completion $\widetilde{X}$ of a symmetric and finitary probabilistic metric space $X=(X,a)$ is symmetric and finitary as well. In fact, symmetry of $\widetilde{X}$ follows from Corollary \ref{cor:CCsym}. To see that $\widetilde{X}$ is also finitary, we show first that $\psi(x)\in\Delta$ is finite, for every $x\in X$ and every right adjoint $\Delta$-module $\psi:X\modto 1$. Let $x\in X$, and put $v=\psi(x)(\infty)$. Since $\eps=\bigvee_{x'\in X}\psi(x')$, for every $u<1$ there is some $x'\in X$ with $f_{1,u}\le\psi(x')$. Hence,
\[
 1=a(x,x')(\infty)=\hom(\psi(x'),\psi(x))(\infty)
\le\hom(f_{1,u},f_{0,v})=f_{0,v\trundiv u}(\infty)=v\trundiv u
\]
for all $u<1$, which implies $v=1$. Given now also $\psi:X\modto 1$ in $\widetilde{X}$, then the distance
\[
 [\psi,\psi']=\bigvee_{x\in X}[\psi,x^*]\otimes[x^*,\psi']
=\bigvee_{x\in X}\psi(x)\otimes\psi'(x)
\]
is finite. 

\begin{remark}
The first construction of a Cauchy-completion of a probabilistic metric space was given by Sherwood \cite{She66} by putting an appropriate probabilistic metric on the set of equivalence classes of Cauchy sequences of a given space. 
\end{remark}

\section{Injective and exponentiable $\V$-categories}

\subsection{Exponentiable $\V$-categories}

We recall that an object $X$ in a category $\catfont{C}$ with finite products is \emph{exponentiable} whenever the functor $X\times-:\catfont{C}\to\catfont{C}$ has a right adjoint $(-)^X:\catfont{C}\to\catfont{C}$. Unwinding the definition, such a right adjoint must produce, for each object $Z$ in $\catfont{C}$, an object $Z^X$ in $\catfont{C}$ so that, for all objects $Y$ in $\catfont{C}$, there is a natural bijection
\[
 \catfont{C}(X\times Y,Z)\simeq\catfont{C}(Y,Z^X).
\]
The category $\catfont{C}$ is called Cartesian closed if every object $X$ of $\catfont{C}$ is exponentiable. 

We are interested in the case $\catfont{C}=\Cat{\V}$, where we now also assume that $\V$, seen as a category, is Cartesian closed. The latter just means that the underlying lattice of our quantale $\V$ is Heyting, that is, for all $u,w\in\V$ there is $u\rw w\in\V$ satisfying
\[
 u\wedge v\le w\iff v\le(u\rw w),
\]
for all $v\in\V$. Since $\V$ is complete, $\V$ is Heyting if and only if $\V$ satisfies the frame law:
\[
 u\wedge\bigvee_{i\in I}u_i=\bigvee_{i\in I}u\wedge u_i,
\]
for all $u,u_i\in\V$ ($i\in I$). Hence, if $\V$ is completely distributive, then it is also Heyting.

Let now $X=(X,a)$ be an exponentiable $\V$-category. We can choose $Y=E=(1,k)$ above, and conclude that the underlying set of the $\V$-category $Z^X$ is given by the set of all $\V$-functors of type $X\times E\to Z$. Here we can identify $X\times E$ with the $\V$-category $\widehat{X}=(X,\widehat{a})$ whose underlying set is $X$ and where
\[
 \widehat{a}(x,y)=a(x,y)\wedge k,
\]
for all $x,y\in X$. Hence, if $k=\top$ is the top-element of $\V$, then $Z^X$ is indeed given by the set of all $\V$-functors from $X$ to $Z$. The $\V$-categorical structure $d$ in $Z^X$ is the largest one on $\Cat{V}(\widehat{X},Z)$ making the evaluation map
\[
 \ev:X\times\Cat{V}(\widehat{X},Z)\to Z,\,(x,h)\mapsto h(x)
\]
a $\V$-functor, that is,
\[
 d(h,l)=\bigwedge_{x_1,x_2\in X}(a(x_1,x_2)\rw c(h(x_1),l(x_2))),
\]
for all $\V$-functors $h,l:\widehat{X}\to Z$ where $Z=(Z,c)$. In fact, an arbitrary $\V$-category $X$ is exponentiable if and only if the structure $d$ defined above turns $\Cat{V}(\widehat{X},Z)$ into a $\V$-category. The following characterisation of exponentiable $\V$-categories can be found in \cite{CH_ExpVCat} (see also \cite{CHS_Quantaloid}).

\begin{theorem}\label{thm:CharExpVCats}
If $\V$ is Heyting, then a $\V$-category $(X,a)$ is exponentiable in $\Cat{\V}$ if and only if
\[
\bigvee_{x\in X}
(a(x_0,x)\wedge v_0)\otimes (a(x,x_2)\wedge v_1)\geq
a(x_0,x_2)\wedge (v_0\otimes v_1),
\]
for all $x_0,x_2\in X$ and all $v_0,v_1\in\V$.
\end{theorem}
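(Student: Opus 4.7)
The strategy is to use the characterisation stated just before the theorem: $(X,a)$ is exponentiable in $\Cat{\V}$ if and only if, for every $\V$-category $Z=(Z,c)$, the formula
\[
d(h,l)=\bigwedge_{x_1,x_2\in X}\bigl(a(x_1,x_2)\rw c(h(x_1),l(x_2))\bigr)
\]
defines a $\V$-category structure on $\Cat{\V}(\widehat{X},Z)$. Reflexivity $k\le d(h,h)$ is automatic (it is equivalent to $h\colon\widehat{X}\to Z$ being a $\V$-functor), so the whole content is the transitivity inequality $d(h,l)\otimes d(l,m)\le d(h,m)$; by Heyting adjunction the latter reads
\[
\bigl(d(h,l)\otimes d(l,m)\bigr)\wedge a(x_0,x_2)\le c(h(x_0),m(x_2))
\]
for all $x_0,x_2\in X$ and all $h,l,m$.

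For the sufficiency I would assume the hypothesis of the theorem. For each intermediate $x\in X$, two Heyting adjunctions give $d(h,l)\wedge a(x_0,x)\le c(h(x_0),l(x))$ and $d(l,m)\wedge a(x,x_2)\le c(l(x),m(x_2))$; tensoring and using transitivity of $c$, and then taking the supremum over $x$, yields
\[
\bigvee_{x\in X}(d(h,l)\wedge a(x_0,x))\otimes(d(l,m)\wedge a(x,x_2))\le c(h(x_0),m(x_2)).
\]
Applying the hypothesis with $v_0=d(h,l)$ and $v_1=d(l,m)$, the left-hand side dominates $(d(h,l)\otimes d(l,m))\wedge a(x_0,x_2)$, which completes this direction.

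For the necessity I would fix $x_0,x_2\in X$ and $v_0,v_1\in\V$ and test exponentiability against $Z=(\V,\hom)$ using three carefully chosen maps $h,l,m\colon\widehat{X}\to\V$ defined by
\begin{align*}
h(y)&=a(x_0,y)\wedge k, & l(y)&=a(x_0,y)\wedge v_0, & m(y)&=\bigvee_{z\in X}(a(x_0,z)\wedge v_0)\otimes(a(z,y)\wedge v_1).
\end{align*}
A short check, using transitivity of $a$ together with the fact that meeting with $k$ or $v_i$ keeps the tensor bounded by $k$ or $v_i$, confirms that each is a $\V$-functor $\widehat{X}\to\V$ and that $d(h,l)\ge v_0$ and $d(l,m)\ge v_1$ (the second by taking $z=y_1$ in the supremum defining $m$). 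Since $a(x_0,x_0)\ge k$ one has $h(x_0)=k$, so $\hom(h(x_0),m(x_2))=m(x_2)$, and transitivity of $d$ then gives $v_0\otimes v_1\le d(h,m)\le a(x_0,x_2)\rw m(x_2)$, which by Heyting adjunction is exactly the stated inequality.

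The hard part will be the necessity direction: one has to produce a single test triple $(h,l,m)$ whose $d$-distances can recover arbitrary quantale elements $v_0,v_1$ and whose ``end-to-end'' value $\hom(h(x_0),m(x_2))$ recovers the candidate supremum on the right. The choice above is essentially forced by these two requirements, combined with the observation that making $h(x_0)=k$ collapses $\hom(h(x_0),-)$ to the identity; once that is fixed, $m$ is dictated by the shape of the desired inequality, and $l$ is the only interpolant making the two factors $v_0$ and $v_1$ split cleanly across $d(h,l)$ and $d(l,m)$.
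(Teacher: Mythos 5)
Your proof is correct. There is nothing in the paper to compare it against: the authors do not prove Theorem \ref{thm:CharExpVCats} but cite \cite{CH_ExpVCat} for it, so your write-up supplies an argument the paper omits. Your route is the natural one given the equivalence, stated just before the theorem, between exponentiability of $(X,a)$ and the formula for $d$ defining a $\V$-category structure on $\Cat{\V}(\widehat{X},Z)$ for every $Z$. The sufficiency direction is the straightforward Heyting-adjunction computation and is fine. For necessity, your test maps do what you claim: each of $h,l,m$ is a $\V$-functor $\widehat{X}\to\V$ by the inequality $(u\wedge v)\otimes(u'\wedge v')\le(u\otimes u')\wedge(v\otimes v')$ combined with transitivity of $a$; the bounds $d(h,l)\ge v_0$ and $d(l,m)\ge v_1$ hold exactly as you indicate (the latter by restricting the supremum defining $m$ to $z=y_1$); and $h(x_0)=k$ gives $\hom(h(x_0),m(x_2))=m(x_2)$, so transitivity of $d$ for $Z=(\V,\hom)$ yields $a(x_0,x_2)\wedge(v_0\otimes v_1)\le m(x_2)$, which is the stated inequality. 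The only point worth flagging is that you lean on the unproved assertion that exponentiability is equivalent to $d$ being a $\V$-category structure for all $Z$; since the paper states this explicitly (again deferring to \cite{CH_ExpVCat}), this is a legitimate starting point in context, but a fully self-contained proof would also have to justify that equivalence.
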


As also shown in \cite{CH_ExpVCat}, this condition simplifies considerably for metric spaces.

\begin{corollary}
An object $(X,a)$ of $\MET$ is exponentiable in $\MET$ if and only if, for all $x_0,x_2\in X$ with $a(x_0,x_2)<\infty$ and all $u_0,u_1\in[0,\infty]$ with $u_0+u_1=a(x_0,x_2)$,
\[
\forall
\varepsilon>0\; \exists x_1\in
X\;:\;a(x_0,x_1)<u_0+\varepsilon\mbox{ and
}a(x_1,x_2)<u_1+\varepsilon.
\]
\end{corollary}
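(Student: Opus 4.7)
The plan is to derive the corollary directly by specialising Theorem \ref{thm:CharExpVCats} to the quantale $\V = ([0,\infty], +, 0)$. Since the quantale order on $[0,\infty]$ is ``$\geqslant$'', we have $\bigvee = \inf$, $\wedge = \max$, and $\otimes = +$, so the quantale-inequality ``$\le$'' becomes ``$\geqslant$'' of real numbers. The lattice $[0,\infty]$ is completely distributive and hence Heyting, so Theorem \ref{thm:CharExpVCats} applies. Its condition translates to the following: for all $x_0, x_2 \in X$ and all $v_0, v_1 \in [0,\infty]$,
\[
\inf_{x \in X} \bigl(\max(a(x_0,x), v_0) + \max(a(x, x_2), v_1)\bigr) \;\leqslant\; \max(a(x_0, x_2),\, v_0 + v_1).
\]

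For the implication ($\Rightarrow$), given $x_0, x_2$ with $a(x_0, x_2) < \infty$ and $u_0 + u_1 = a(x_0, x_2)$, I would instantiate the displayed inequality with $v_0 = u_0$ and $v_1 = u_1$; the right-hand side then equals $a(x_0, x_2)$. For $\varepsilon > 0$ the infimum yields some $x_1 \in X$ with $\max(a(x_0,x_1),u_0) + \max(a(x_1,x_2), u_1) < u_0 + u_1 + \varepsilon$. Setting $\alpha = \max(a(x_0,x_1),u_0) - u_0$ and $\beta = \max(a(x_1,x_2),u_1) - u_1$ gives $\alpha, \beta \geqslant 0$ with $\alpha + \beta < \varepsilon$, so $a(x_0,x_1) \leqslant u_0 + \alpha < u_0 + \varepsilon$ and similarly $a(x_1, x_2) < u_1 + \varepsilon$.

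For the converse ($\Leftarrow$), I would establish the displayed inequality by cases. If any of $a(x_0,x_2), v_0, v_1$ equals $\infty$, both sides are infinite. If $v_0 \geqslant a(x_0,x_2)$ (respectively, $v_1 \geqslant a(x_0,x_2)$) the choice $x = x_2$ (respectively, $x = x_0$) bounds the infimum by $v_0 + v_1$. The remaining case is $v_0, v_1 < a(x_0,x_2) < \infty$, in which I would split $a(x_0,x_2) = u_0 + u_1$ according to the sign of $v_0 + v_1 - a(x_0,x_2)$: if $v_0 + v_1 \leqslant a(x_0, x_2)$ choose a split with $u_0 \geqslant v_0$ and $u_1 \geqslant v_1$, whereas if $v_0 + v_1 > a(x_0, x_2)$ take $u_0 = v_0$ and $u_1 = a(x_0,x_2) - v_0$, so that $u_0 \leqslant v_0$ and $u_1 \leqslant v_1$. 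In both sub-cases the hypothesis produces, for each $\varepsilon > 0$, an $x_1$ with $a(x_0,x_1) < u_0 + \varepsilon$ and $a(x_1,x_2) < u_1 + \varepsilon$; one then estimates each maximum by $u_i + \varepsilon$ in the first sub-case and by $v_i + \varepsilon$ in the second, summing to at most $\max(a(x_0,x_2), v_0+v_1) + 2\varepsilon$. Letting $\varepsilon \to 0$ closes the argument.

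The main piece of bookkeeping is the case distinction in the ($\Leftarrow$) direction, where the appropriate decomposition of $a(x_0,x_2)$ depends on whether $v_0 + v_1$ lies above or below $a(x_0,x_2)$; once the correct split is chosen, the required bounds follow from elementary real arithmetic.
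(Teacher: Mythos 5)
Your derivation is correct and follows exactly the route the paper intends: the corollary is obtained by specialising Theorem \ref{thm:CharExpVCats} to the quantale $([0,\infty],+,0)$, where $\bigvee=\inf$, $\wedge=\max$ and the order is reversed. The paper itself gives no proof (it only cites \cite{CH_ExpVCat}), and your case analysis in the ($\Leftarrow$) direction supplies the omitted details correctly.
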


\subsection{Injectives are exponentiable}

We already observed in \cite{Hof_DualityDistSp} that the criterion above implies that every injective metric space is exponentiable. In general, a $\V$-category $X$ is called \emph{injective} if, for every fully faithful $\V$-functor $i:A\to B$ and every $f:A\to X$ in $\Cat{\V}$, there exists a $\V$-functor $g:B\to X$ with $g\cdot i\simeq f$. Note that the $\V$-category $\V$ is injective in $\Cat{\V}$, for any quantale $\V$.

\begin{theorem}
Assume that $\V$ is Heyting. Then the following assertions are equivalent.
\begin{eqcond}
\item\label{cond:InjExp1} For all $u,v,w\in\V$, $w\wedge(u\otimes v)=\bigvee\{u'\otimes v'\mid u'\le u, v'\le v,u'\otimes v'\le w\}$.
\item\label{cond:InjExp2} Every injective $\V$-category is exponentiable.
\item\label{cond:InjExp3} The $\V$-category $\V$ is exponentiable.
\end{eqcond}
\end{theorem}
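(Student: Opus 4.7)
The plan is to prove the cycle $(\ref{cond:InjExp2})\Rw(\ref{cond:InjExp3})\Rw(\ref{cond:InjExp1})\Rw(\ref{cond:InjExp2})$, using Theorem~\ref{thm:CharExpVCats} as the bridge between exponentiability and an algebraic identity in $\V$, and using a carefully designed three-point $\V$-category as the test object for injectivity.

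\textbf{Easy direction $(\ref{cond:InjExp2})\Rw(\ref{cond:InjExp3})$.} As noted immediately before the theorem, the $\V$-category $\V=(\V,\hom)$ is injective in $\Cat{\V}$. Hence if every injective $\V$-category is exponentiable, so is $\V$.

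\textbf{Direction $(\ref{cond:InjExp3})\Rw(\ref{cond:InjExp1})$.} Apply Theorem~\ref{thm:CharExpVCats} to $(\V,\hom)$. Specialising the resulting inequality at $x_0=k$, $x_2=w$, $v_0=u$, $v_1=v$, and using $\hom(k,u')=u'$, we get
\[
 w\wedge(u\otimes v)\le\bigvee_{u'\in\V}(u'\wedge u)\otimes(\hom(u',w)\wedge v).
\]
Each summand on the right has the form $a\otimes b$ with $a=u'\wedge u\le u$, $b=\hom(u',w)\wedge v\le v$ and $a\otimes b\le u'\otimes\hom(u',w)\le w$. Hence the right-hand side is bounded above by $\bigvee\{a\otimes b\mid a\le u, b\le v, a\otimes b\le w\}$. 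The reverse inequality is immediate, which yields~(\ref{cond:InjExp1}).

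\textbf{Direction $(\ref{cond:InjExp1})\Rw(\ref{cond:InjExp2})$.} This is the main step. Let $X=(X,a)$ be injective; I will verify the condition of Theorem~\ref{thm:CharExpVCats}. Fix $x_0,x_2\in X$ and $v_0,v_1\in\V$. By~(\ref{cond:InjExp1}) applied to $u=v_0$, $v=v_1$, $w=a(x_0,x_2)$, it suffices to show that for every pair $u',v'\in\V$ with $u'\le v_0$, $v'\le v_1$ and $u'\otimes v'\le a(x_0,x_2)$, there exists $x\in X$ with $u'\le a(x_0,x)$ and $v'\le a(x,x_2)$. To produce such $x$, I build a three-point $\V$-category $A$ on $\{0,1,2\}$ by setting $a'(i,i)=k$, $a'(i,j)=\bot$ when $i>j$, and
\[
 a'(0,1)=u',\quad a'(1,2)=v',\quad a'(0,2)=u'\otimes v'.
\]
A routine case check shows $A$ is a $\V$-category (the only nontrivial transitivity is $a'(0,1)\otimes a'(1,2)\le a'(0,2)$, which holds by construction). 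The full sub-$\V$-category $A_0$ on $\{0,2\}$ gives a fully faithful inclusion $i\colon A_0\hrw A$, and the assignment $0\mapsto x_0$, $2\mapsto x_2$ defines a $\V$-functor $f\colon A_0\to X$ since $a'(0,2)=u'\otimes v'\le a(x_0,x_2)$ and $a'(2,0)=\bot$. By injectivity of $X$ there is a $\V$-functor $g\colon A\to X$ with $g\cdot i\simeq f$. Setting $x:=g(1)$, the $\V$-functor property of $g$ combined with $g(0)\simeq x_0$ and $g(2)\simeq x_2$ yields $u'\le a(x_0,x)$ and $v'\le a(x,x_2)$. Since $u'\le v_0$ and $v'\le v_1$, we conclude
\[
 u'\otimes v'\le(a(x_0,x)\wedge v_0)\otimes(a(x,x_2)\wedge v_1)\le\bigvee_{x\in X}(a(x_0,x)\wedge v_0)\otimes(a(x,x_2)\wedge v_1),
\]
which, together with~(\ref{cond:InjExp1}), gives the criterion of Theorem~\ref{thm:CharExpVCats} and thus the exponentiability of $X$.

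The main obstacle is this last direction: one must choose the three-point $\V$-category $A$ so that (a) transitivity genuinely holds on the nose, (b) the inclusion $A_0\hrw A$ is fully faithful, and (c) the map $A_0\to X$ is a $\V$-functor, so that injectivity of $X$ can be invoked to produce the intermediate point. All the remaining inequalities are then a direct consequence of the $\V$-functoriality of the extension $g$.
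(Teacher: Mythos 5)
Your proposal is correct and follows essentially the same route as the paper: the cycle through Theorem~\ref{thm:CharExpVCats}, the specialisation of the exponentiability criterion for $(\V,\hom)$ to extract condition~(\ref{cond:InjExp1}), and the extension of a two-point $\V$-category along a three-point one via injectivity are exactly the paper's argument (the paper orders the implications as $(\ref{cond:InjExp1})\Rw(\ref{cond:InjExp2})\Rw(\ref{cond:InjExp3})\Rw(\ref{cond:InjExp1})$, but this is the same cycle). Your added care about $g\cdot i\simeq f$ rather than strict equality is a harmless refinement.
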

\begin{proof}
Assume first that $\V$ satisfies the condition in \eqref{cond:InjExp1}, and let $X=(X,a)$ be an injective $\V$-category. We verify that $X$ satisfies the condition of Theorem \ref{thm:CharExpVCats}. To this end, let $x_0,x_2\in X$ and $u,v\in\V$, and put $w=a(x_0,x_2)$. Let $u'\le u$, $v'\le v$ with $u'\otimes v'\le w$. We define $\V$-categories $A=(A,c)$ and $B=(B,d)$ where $A=\{0,2\}$, $B=\{0,1,2\}$ and
\begin{align*}
 c(0,2)=d(0,2)=u'\otimes v',&& d(0,1)=u', && d(1,2)=v',
\end{align*}
and $c(x,x)=d(x,x)=k$ and $c(x,y)=d(x,y)=\bot$ in all other cases. Since $X$ is injective, there is an extension $g:B\to X$ of the $\V$-functor $f:A\to X,\,i\mapsto x_i$ along the inclusion $\V$-functor $A\hrw B$,
\[
\xymatrix{\{0\xrightarrow{\,u'\otimes w'\,}2\}\ar@{^(->}[r]\ar[dr]_f &
\{0\xrightarrow{\,u'}1\xrightarrow{\,v'}2\}\ar[d]^g\\
&X}
\]
and for $x:=g(1)$ one has $a(x_0,x)\geq u'$ and $a(x,x_2)\geq v'$, hence
\[
 (a(x_0,x)\wedge u)\otimes(a(x,x_2)\wedge v)\geq u'\otimes v'.
\]
Consequently,
\[
\bigvee_{x\in X}a(x_0,x)\wedge u)\otimes(a(x,x_2)\wedge v)\geq
\bigvee\{u'\otimes v'\mid u'\le u, v'\le v,u'\otimes v'\le w\}=a(x_0,x_2)\wedge(u\otimes v),
\]
that is, $X$ is exponentiable. Certainly, \eqref{cond:InjExp2} implies \eqref{cond:InjExp3}. Assume now that $\V$ is exponentiable. Hence, for $u,v,w\in\V$, 
\[
\bigvee\{(\hom(k,x)\wedge u)\otimes(\hom(x,w)\wedge v)\mid x\in\V\}
=\hom(k,w)\wedge(u\otimes v)=w\wedge(u\otimes v),
\]
and the condition of \eqref{cond:InjExp1} follows since $(\hom(k,x)\wedge u)\otimes(\hom(x,w)\wedge v)\le w$, $\hom(k,x)\wedge u\le u$ and $\hom(x,w)\wedge v\le v$.
\end{proof}

\begin{corollary}
Assume that $\V$ satisfies $w\wedge(u\otimes v)=\bigvee\{u'\otimes v'\mid u'\le u, v'\le v,u'\otimes v'\le w\}$, for all $u,v,w\in\V$. Then the full subcategory of $\Cat{\V}$ defined by the injective $\V$-categories is Cartesian closed.
\end{corollary}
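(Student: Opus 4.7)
The plan is to verify that the full subcategory $\catfont{Inj}_\V$ of $\Cat{\V}$ spanned by the injective $\V$-categories is closed under finite products and under exponentials in $\Cat{\V}$; fullness of the inclusion then transports the exponential adjunction down to $\catfont{Inj}_\V$.

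First I would observe that the terminal $\V$-category $1=(1,\top)$ is injective, and that if $X,Y$ are injective then so is $X\times Y$: given a fully faithful $i\colon A\to B$ and $f\colon A\to X\times Y$, extend the two components $\pi_X\cdot f$ and $\pi_Y\cdot f$ separately using injectivity of $X$ and $Y$, and pair them. Hence $\catfont{Inj}_\V$ has finite products, computed as in $\Cat{\V}$.

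Next, for $X,Y\in\catfont{Inj}_\V$, the $\V$-category $X$ is exponentiable by the preceding theorem, so $Y^X$ exists in $\Cat{\V}$. The core step is to show that $Y^X$ is injective. Let $i\colon A\to B$ be a fully faithful $\V$-functor and $h\colon A\to Y^X$ be a $\V$-functor. Passing to the exponential mate yields $\tilde h\colon X\times A\to Y$. Now I would check that $1_X\times i\colon X\times A\to X\times B$ is fully faithful: since the Cartesian product in $\Cat{\V}$ carries the structure $(a_X\wedge a_A)$, and $a_A(a,a')=a_B(i(a),i(a'))$ by fully faithfulness of $i$, one has
\[
 a_X(x,x')\wedge a_A(a,a')=a_X(x,x')\wedge a_B(i(a),i(a')),
\]
which is exactly the condition that $1_X\times i$ be fully faithful. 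Injectivity of $Y$ then provides an extension $\tilde g\colon X\times B\to Y$ with $\tilde g\cdot(1_X\times i)\simeq\tilde h$, and taking the exponential mate produces $g\colon B\to Y^X$ with $g\cdot i\simeq h$. Thus $Y^X$ is injective.

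The main obstacle is ensuring that the extension obtained via $Y$-injectivity descends through the exponential adjunction to an extension up to equivalence of $h$, rather than merely on the nose; this requires that the bijection $\Cat{\V}(X\times B,Y)\simeq\Cat{\V}(B,Y^X)$ respects the pointwise order on hom-sets, which is standard since $X\times-\dashv(-)^X$ is a $2$-adjunction between locally ordered categories. With products and exponentials in $\catfont{Inj}_\V$ both inherited from $\Cat{\V}$ and the inclusion being full, the exponential adjunction $\Cat{\V}(X\times Y,Z)\simeq\Cat{\V}(Y,Z^X)$ restricts verbatim to $\catfont{Inj}_\V$, so $\catfont{Inj}_\V$ is Cartesian closed.
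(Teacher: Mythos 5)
Your proposal is correct and follows essentially the same route as the paper: the paper's proof likewise reduces the statement to closure of the injectives under finite products and to the fact that $Y^X$ is injective whenever $Y$ is injective and $X$ is exponentiable, citing a ``categorical standard argument'' (Johnstone, and the proof of Corollary \ref{cor:PreservCauchyInj}) for the latter. Your write-up merely makes that standard argument explicit --- transposing along $X\times-\dashv(-)^X$ and checking that $1_X\times i$ is fully faithful --- which is exactly what the cited argument does.
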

\begin{proof}
Clearly, any product of injective $\V$-categories is injective. Furthermore, a categorical standard argument (see \cite[Lemma 4.10]{Joh_StoneSp}, or the proof of Corollary \ref{cor:PreservCauchyInj}) shows that with $Y$ also $Y^X$ is injective, for every exponentiable $\V$-category $X$.
\end{proof}

Given $u,v,w \in [0,\infty]$, there are always $u', v' \in [0,+\infty]$ such that $w \wedge(u \otimes v)= u' \otimes v'$, hence every injective metric space is exponentiable. More generally, the same argument shows that every injective $\V$-category is exponentiable, for $\V$ being linearly ordered.

\subsection{Example: probabilistic metric spaces}

The quantale $\Delta$ is completely distributive, hence in particular Heyting. To verify that $w\wedge(u\otimes v)=\bigvee\{u'\otimes v'\mid u'\le u, v'\le v,u'\otimes v'\le w\}$, for all $u,v,w\in\Delta$, it is enough to consider $u=f_{\delta_1,\alpha_1}$, $v=f_{\delta_2,\alpha_2}$ and $w=f_{\delta_3,\alpha_3}$, then $u\otimes v= f_{\delta_1+\delta_2,\alpha_1\cdot\alpha_2}$. We discuss the various possibilities:
\begin{itemize}
\item if $\delta_1+\delta_2\ge\delta_3$ and $\alpha_1\cdot\alpha_2\le\alpha_3$, then $w\wedge(u\otimes v)=(u\otimes v)$;
\item if $\delta_1+\delta_2\ge\delta_3$ and $\alpha_1\cdot\alpha_2\ge\alpha_3$, then $w\wedge(u\otimes v)=(u'\otimes v)$ where $u'=f_{\delta_1,\alpha}$ with $\alpha\cdot\alpha_2=\alpha_3$, $\alpha\le\alpha_1$;
\item if $\delta_1+\delta_2\le\delta_3$ and $\alpha_1\cdot\alpha_2\le\alpha_3$, then $w\wedge(u\otimes v)=(u'\otimes v)$ where $u'=f_{\delta,\alpha_1}$ with $\delta+\delta_2=\delta_3$, $\delta_1\le\delta$;
\item if $\delta_1+\delta_2\le\delta_3$ and $\alpha_1\cdot\alpha_2\ge\alpha_3$, then $w\wedge(u\otimes v)=(u'\otimes v)$ where $u'=f_{\delta,\alpha}$ with $\delta+\delta_2=\delta_3$, $\delta\ge\delta_1$ and $\alpha\cdot\alpha_2=\alpha_3$, $\alpha\le\alpha_1$.
\end{itemize}
Therefore one has
\begin{theorem}
Every injective probabilistic metric space is exponentiable, in particular, $\Delta$ is exponentiable in $\PROBMET$. Furthermore, the full subcategory of $\PROBMET$ defined by all injective probabilistic metric spaces is Cartesian closed.
\end{theorem}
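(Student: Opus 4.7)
My plan is to apply the general theorem stating that a Heyting quantale $\V$ satisfying
\[
 w\wedge(u\otimes v)=\bigvee\{u'\otimes v'\mid u'\le u,\;v'\le v,\;u'\otimes v'\le w\}
\]
makes every injective $\V$-category exponentiable, together with the accompanying corollary giving Cartesian closedness. So all the work reduces to verifying this condition for $\V=\Delta$. First I would remark that $\Delta$ is completely distributive (as established in Subsection \ref{subsect:Delta}), hence Heyting, so the hypotheses of the general theorem are in force as soon as the distributive identity is checked.

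To verify the identity, I would exploit complete distributivity to reduce the problem to the generating elements $f_{\delta,\alpha}$. Since both sides of the identity commute with suprema in $u$, $v$ and $w$ separately (the left side by the frame law and by $\otimes$ distributing over $\bigvee$, the right side trivially), and since every element of $\Delta$ is a supremum of elements of the form $f_{\delta,\alpha}$, it suffices to consider $u=f_{\delta_1,\alpha_1}$, $v=f_{\delta_2,\alpha_2}$, $w=f_{\delta_3,\alpha_3}$. Using the calculation $f_{\delta_1,\alpha_1}\otimes f_{\delta_2,\alpha_2}=f_{\delta_1+\delta_2,\alpha_1\cdot\alpha_2}$ from Subsection \ref{subsect:Delta}, a four-case analysis on the pairs of inequalities ($\delta_1+\delta_2\lessgtr\delta_3$ and $\alpha_1\alpha_2\lessgtr\alpha_3$) exhibits an explicit pair $(u',v')\le(u,v)$ with $u'\otimes v'=w\wedge(u\otimes v)$ in each case. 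This is the content of the bullet list in the paper, and it shows that the supremum on the right-hand side is actually attained.

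Given the distributive identity, the general theorem immediately yields that every injective probabilistic metric space is exponentiable. In particular, since $\Delta=(\Delta,\hom)$ is the prototypical injective $\V$-category (being injective in $\Cat{\Delta}$ for any quantale $\Delta$, as noted just before the previous theorem), we get that $\Delta$ itself is exponentiable in $\PROBMET$. The Cartesian closedness of the full subcategory of injective probabilistic metric spaces follows from the corresponding corollary for general $\V$. The main (and only) obstacle is the case analysis for the distributive identity, which is essentially bookkeeping: identifying, for each of the four sign configurations, the right $u'=f_{\delta,\alpha}$ whose tensor with $v$ hits the meet $w\wedge(u\otimes v)$ on the nose. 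No surprise difficulties should arise once the reduction to the generators $f_{\delta,\alpha}$ is in place.
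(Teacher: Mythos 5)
Your proposal is correct and follows essentially the same route as the paper: reduce the distributivity condition $w\wedge(u\otimes v)=\bigvee\{u'\otimes v'\mid u'\le u,\;v'\le v,\;u'\otimes v'\le w\}$ to the generators $f_{\delta,\alpha}$ via complete distributivity of $\Delta$, settle it by the same four-case analysis, and then invoke the general theorem and its corollary. Your explicit justification of the reduction step (both sides behave correctly under suprema in each argument) is a detail the paper leaves implicit, but it is not a different method.
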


%\bibliographystyle{hsiam} %halpha
%\bibliography{/home/dirk/Documents/tex/bibliography/bibliographyOLD}

\end{document}